\documentclass[11pt,a4paper,twoside]{article}
\usepackage[utf8]{inputenc}
\usepackage{amsmath,amsfonts,amssymb,amsthm,bbm,latexsym,mathrsfs}
\usepackage{graphicx,color,epsfig,fancyhdr,dsfont}
\usepackage{enumerate}
\usepackage{hyperref}
\usepackage{indentfirst}
\usepackage[all]{hypcap}
\usepackage{placeins}
\usepackage[affil-it]{authblk}
\usepackage{color}
\usepackage[includeheadfoot,margin=3.5cm]{geometry}
\usepackage{subcaption}
\usepackage{xcolor}
\usepackage{float}
\allowdisplaybreaks[4]

\allowdisplaybreaks

\newtheorem{theorem}{Theorem}
\newtheorem{definition}{Definition}
\newtheorem{lemma}{Lemma}
\newtheorem{assumption}{Assumption}

\newtheorem{remark}{Remark}

\def\b1{{\mathbbm{1}}}

\numberwithin{equation}{section}

\newcommand{{\X}}{{\mathbb{X}}}                     

\title{Ergodic Stochastic Optimal Control Problems}

\author[1]{Chenglin Ma}
\author[2,3]{Huaizhong Zhao}
\affil[1]{Research Center for Mathematics and Interdisciplinary Sciences, Shandong University, Qingdao 266237, China}
\affil[2]{Department of Mathematical Sciences, Durham University, DH1 3LE, UK}
\affil[3]{School of Mathematics, Shandong University, Jinan 250100, China}
\affil[ ]{machenglin@sdu.edu.cn, huaizhong.zhao@durham.ac.uk}
\date{~}
\begin{document}
\maketitle
\begin{abstract}
In this article, we introduce a novel approach to solving the ergodic stochastic optimal control problem whose dynamics is driven by a controlled stochastic differential equation. The coefficients of this stochastic differential equation are non-autonomous but periodic in time. We first prove that the infinite horizon average stochastic optimal control problem is ergodic, \emph{i.e.}, the value function equals a constant $\rho$ which is independent of the initial conditions. Based on the result of ergodicity, we construct an auxiliary function $w(t,x)$ that is well-defined and periodic in time. We can prove that the pair $(w,\rho)$ satisfies the dynamic programming principle and is a viscosity solution of the associated Hamilton-Jacobi-Bellman equation. Finally, we apply our results to the study of ergodic backward stochastic differential equations. Our method is even new in the homogeneous case.
\end{abstract}
\textbf{Keywords:}
Ergodic stochastic optimal control;  dynamic programming principle;  Hamilton-Jacobi-Bellman equation;  viscosity solution.

\section{Introduction}
The ergodic stochastic optimal control problem is one of the most important criteria in control theory, the goal of which is to maximize the ergodic payoff functional over $\mathcal{U}$ the set of all admissible controls, \emph{i.e.}, the value function is given as
\begin{align*}
\rho(x)=\sup_{u\in\mathcal{U}}\limsup_{T\to\infty}\frac{1}{T}E\int_0^Tf(s,X_s,u_s)ds,
\end{align*}
subject to an infinite horizon stochastic controlled system
\begin{align*}
\begin{cases}
dX_s=b(s,X_s,u_s)ds+\sigma(s,X_s,u_s)dB_s,~~s\geq 0,\\
X_0=x\in\mathbb{R}^d.
\end{cases}
\end{align*}
Here, all the coefficients are periodic with a common period $\tau>0$.

Ergodicity is significant in the study of random dynamical systems in describing their large-time behaviour. Ergodicity for Markov chains has attracted attention from many researchers,  e.g. Down-Meyn-Tweedie \cite{10.1214/aop/1176987798}, Meyn-Tweedie \cite{Meyn2009}, and Miller \cite{Miller1966}, to name but a few. Harris \cite{Har56} introduced the method of recurrence and small set to study the ergodicity of Markov chains, 
and later Hairer-Mattingly \cite{hairer2011} refined the Lyapunov condition and the local Doeblin condition to give an explicit formula of the contraction rate in the stationary case.

Random periodic processes are ubiquitous in the real world, e.g. appearing in economics, climate dynamics, etc. They have been studied, together with an ergodic theory, in the past 15 years (see Feng-Zhao \cite{FZ2020} and references therein). 
The study of stochastic controls associated with these problems is an important problem in applications, but a general mathematical theory is almost empty. As far as we know, there is only one work on periodic controlled stochastic systems (Sun-Yong \cite{SUN2024189}), which is for a linear-quadratic control problem. 
This is very different from the problem in a stationary regime. For example, an economic cycle contains the expansion and contraction stages, at each control strategies should be different to reflect the random periodicity.

A basic framework solving an stochastic optimal control problem is to prove that the value function satisfies the dynamic programming principle (DPP) based on the pioneering work of Bellman \cite{B}, and is the unique viscosity solution or Sobolev weak solution of the associated Hamilton-Jacobi-Bellman (HJB) equation, see, e.g. Buckdahn-Ma \cite{doi:10.1137/S036301290444335X}, Ma-Zhao \cite{10130640}, Pham-Wei \cite{doi:10.1137/16M1071390}, Wei-Wu-Zhao \cite{Wu2014} and Yong-Zhou \cite{YZ}. However, this method may fail to work in infinite-horizon ergodic stochastic optimal control problems, since it is not easy to find the dynamic programming equation satisfied by $\rho(x)$. Finding a way to solve the ergodic control problems becomes an interesting topic.

Generally speaking, there have been two approaches to studying the ergodic optimal control problem. One is to directly consider the ergodic HJB equation 
\begin{align*}
\rho=\sup_{u\in U}\{f(x,u)+Dw(x)\cdot b(x,u)+\frac{1}{2}tr[\sigma\sigma'(x,u)D^2w(x)]\},
\end{align*}
whose viscosity solution pair $(w,\rho)$ is given by the asymptotic limit as $\alpha\downarrow0$ of the corresponding infinite-horizon $\alpha$-discounted HJB equation, e.g. Arisawa-Lions \cite{AL1998}, Bao-Tang \cite{BAO2023127404}, Cosso-Fuhrman-Pham \cite{COSSO20161932} and Frehse-Bensoussan \cite{FrehseBensoussan}. Compared with the traditional DPP method, this approach is in some sense a reverse process.

Another approach is to consider an ergodic forward-backward stochastic differential equation (FBSDE) introduced in Fuhrman-Hu-Tessitore \cite{HuYing2009},
and the Girsanov transformation was used to introduce a weak formulation of a control through a family of equivalent probability measures. Thus, the following ergodic FBSDE system with a control was subsequently studied 
\begin{align*}
  \begin{cases}
    dX_s^x=(AX_s^x+F(X_s^x))dt+G(dW_s+R(u_s)ds),~~X_0=x,\\
    dY_s^x=-[\psi(X_s^x,Z_s^x)-Z_sR(u_s)-\rho]ds+Z_sdW_s.
  \end{cases}
\end{align*} 
It was proved that 
\begin{align*}
\rho=\sup_{u}\limsup_{T\to\infty}\frac{1}{T}E\int_0^Tf(X_s^x,u_s)ds,
\end{align*}
if $\psi(x,z)=\sup_{u\in U}\{f(x,u)+zR(u)\}$, see also Guatteri-Tessitore \cite{Giuseppina2020}, Jackson-Liang \cite{Jackson-Liang} and Hu-Liang-Tang \cite{HLT2020}. In order to solve the ergodic FBSDE problem, 
a global dissipative condition was usually imposed on $Ax+F(x)$.

In this article, we investigate an ergodic stochastic control problem with periodic coefficients under a weakly dissipative condition. The key innovation is that we introduce an entirely new method. 
The specific novelties and contributions of the current paper can be summarized as follows.

(1) Unlike existing approaches to studying ergodic stochastic control problems, all of the results in this paper do not depend on the study of the associated discounted control problems. We first prove that there exists a constant $\rho$ that is independent of the initial conditions such that 
\begin{align*}
  \rho=\sup_u\lim_{T\to\infty}\frac{1}{T}E\int_0^Tf(s,X_s,u(s,X_s))ds.
\end{align*}
With this result, we construct an auxiliary function $(t,x)\mapsto w(t,x)$, and verify that the pair $(w,\rho)$ satisfies the dynamic programming principle and is a viscosity solution of the related HJB equation: 
\begin{align*}
\partial_s\varphi(t,x)+\sup\limits_{u\in U}[f(t,x,u)+D\varphi(t,x)\cdot b(t,x,u)+\frac{1}{2}tr(\sigma\sigma'(t,x,u)D^2\varphi(t,x))]=\rho,\\
(t,x)\in[0,\tau)\times\mathbb{R}^d,
\end{align*}
with $\varphi(0,\cdot)=\varphi(\tau,\cdot)$.

(2) We assume that the drift coefficient is weakly dissipative (Assumption \ref{assume}), and the controlled diffusion coefficient is non-degenerate. To the best of our knowledge, weakly dissipativity has received little attention in the existing literature on ergodic control and ergodic BSDE. Existing approaches typically rely on a global dissipative condition, which ensures that trajectories starting from different initial states contract pathwise and provides the key ingredient for establishing long-time convergence. However, 
we show that the convergence only holds in the sense of law under a weakly dissipative condition. For this, we give a completely different method in this paper. 
This probabilistic framework replaces trajectory-wise contraction and provides the foundation for proving ergodicity for the average control problem, constructing the auxiliary periodic value function, and deriving the associated dynamic programming principle and HJB equation. 
It is worth pointing out that a special weakly dissipative condition
   $\langle Ax,x\rangle\leq -K|x|^2, |F(x)|\leq M$,
for some constant $K,M>0$, was imposed in Debussche-Hu-Tessitore \cite{DEBUSSCHE2011407} and Madec \cite{Madec2015}.

(3) We give the ergodicity of the controlled Markov process under the periodic setting, and especially, we give an infinite-horizon periodic HJB equation. To the best of our knowledge, this is the first to give the infinite-horizon time-dependent HJB equation (\ref{PDE}). It is worth pointing out that the ergodic HJB equation is very different from the HJB equation of finite time stochastic control problems which has a terminal condition, 
(\ref{PDE}) can be seen that the terminal condition is forgotten during the process of the ergodicity taking terminal time to infinity.

This paper provides a new methodology for ergodic stochastic control problems. Even for homogeneous stochastic controlled systems, this method is still novel.

The remainder of this article is organized as follows. In the following section, we mainly introduce the setting of this problem. In Section \ref{convergency results}, we prove the ergodicity results. In Section \ref{dynamic programming principle}, we construct the auxiliary function based on the ergodicity results and give the dynamic programming equation satisfied by this auxiliary function. In Section \ref{HJB}, we give the related HJB equation and prove that the pair $(w,\rho)$ is its viscosity solution.

\section{Preliminaries}\label{control}
Consider a type of stochastic optimal control problem whose dynamics is driven by an infinite-horizon $d$-dimensional controlled SDE
\begin{equation}\label{dynamics}
\begin{cases}
dX_s^{t,x,u}=b(s,X_s^{t,x,u},u_s)ds+\sigma(s,X_s^{t,x,u},u_s)dB_s,\ s\geq t,\\
X_t^{t,x,u}=x\in\mathbb{R}^d, 
\end{cases}
\end{equation}
where $\{B_s,s\geq t\}$ is a $d$-dimensional Brownian motion defined on the completed probability space $(\Omega,\mathcal{F},P)$ and the control process $u(\cdot)$ is a feedback one taking values in a nonempty compact set $U\subset\mathbb{R}^k$, \emph{i.e.}, $u_s(\omega)=u(s,X_s(\omega))\in U$, where $u$ is a measurable deterministic function defined on $[0,\infty)\times\mathbb{R}^d$. 
We denote by $\mathcal{A}$ the set of all functions $u(s,x)$ that are c\`adl\`ag in $s$ uniformly in $x$ and Lipschitz continuous in $x$ uniformly in $s$, for example, $u(s,x)=u_1(s)+u_2(x)$ with c\`adl\`ag function $u_1$ and Lipschitz continuous function $u_2$.

The average payoff functional is given by
\begin{equation}\label{payoff functional}
J(t,T,x,u(\cdot))=\frac{1}{T}E\left[\int_t^{t+T}f(s,X_s^{t,x,u},u(s,X_s^{t,x,u}))ds\right].
\end{equation} 
Throughout this article, the mappings 
\begin{align*}
b,\sigma,f:[0,\infty)\times\mathbb{R}^d\times U\to\mathbb{R}^d,\mathbb{R}^{d\times d},\mathbb{R},
\end{align*}
are always assumed to satisfy the following conditions.

\begin{assumption}\label{assume}
For $l\in\{b,\sigma,f\}$.\\
1)\ The mapping $l(\cdot,x,\cdot)$ is continuous uniformly in $x$.\\
2)\ There exists some constant $C_l>0$ such that for all $s\geq0$, $x,y\in\mathbb{R}^d$ and $u\in\mathcal{A}$ 
\begin{align*}
|l(s,x,u(s,x))-l(s,y,u(s,y))|\leq C_l|x-y|.
\end{align*}
3)\ There exist some constants $L',\lambda>0$ such that for all $s\geq0$, $x\in\mathbb{R}^d$ and $v\in U$
\begin{align*}
2\langle x,b(s,x,v)\rangle\leq L'-\lambda|x|^2.
\end{align*}
4)\ There exist $0<\underline{\sigma}<\overline\sigma$ such that for all $s\geq0$, $x\in\mathbb{R}^d$ and $v\in U$
\begin{align*}
  \underline\sigma |\xi|^2\leq\langle\sigma\sigma'(s,x,v)\xi,\xi\rangle\leq\overline\sigma|\xi|^2.
\end{align*}
5)\ There exists a minimal $\tau>0$ such that for any $s\geq0$
\begin{align*}
l(s,\cdot,\cdot)=l(s+\tau,\cdot,\cdot).
\end{align*}
\end{assumption}

\begin{remark}
The Assumption \ref{assume} implies the global linear growth condition of the mappings $b, \sigma$ and $f$, i.e., there exists some constant $C>0$ such that for all $s\geq0$, $x\in\mathbb{R}^d$ and $u\in \mathcal{A}$
\begin{align*}
  |b(s,x,u(s,x))|+\|\sigma(s,x,u(s,x))\|+|f(s,x,u(s,x))|\leq C(1+|x|).
\end{align*}
It also implies that there exist some constants $L,\lambda>0$ such that 
\begin{align*}
2\langle x,b(s,x,u(s,x))\rangle+\|\sigma(s,x,u(s,x))\|^2\leq L-\lambda|x|^2.
\end{align*}
\end{remark}

For any $n\tau\leq t<(n+1)\tau$, we denote by $\mathcal{U}_t $ the set of all admissible feedback controls $u(\cdot)$ whose corresponding deterministic measurable functions belong to $\mathcal{A}$ and are $\tau$-periodic in time after $(n+1)\tau$, and $\mathcal{A}_t\subset\mathcal{U}_t$ the set of all admissible controls $u(\cdot)$ whose corresponding functions are $\tau$-periodic after $t$.

For any given $u(\cdot)\in\mathcal{U}_t$, it is well-known from Assumption \ref{assume} that the controlled SDE \eqref{dynamics} admits a unique strong solution. Moreover, for all $s\geq t$, $x,y\in\mathbb{R}^d$ and $u(\cdot)\in\mathcal{U}_t$
\begin{equation}\label{estimates}
\begin{aligned}
  &E|X_s^{t,x,u}|^2\leq \frac{L}{\lambda}+e^{-\lambda(s-t)}|x|^2,\\
  &E|X_s^{t,x,u}-X_s^{t,y,u}|^2\leq e^{(C_\sigma^2+2C_b)(s-t)}|x-y|^2.
\end{aligned}
\end{equation}
The value function associated with the average payoff functional (\ref{payoff functional}) is defined as
\begin{align}\label{value function}
\rho(t,x)=\sup_{u(\cdot)\in\mathcal{U}_t}\limsup_{T\to\infty}J(t,T,x,u(\cdot)).
\end{align}

If there exists a constant $\rho$ such that for all $t\geq 0$ and $x\in\mathbb{R}^d$,
\begin{align*}
\rho(t,x)\equiv\rho,
\end{align*}
we say that the long-time average stochastic optimal control problem is ergodic. Our main aim in this article is to study the ergodicity of the average control problem and find the optimal controls.

\section{Ergodicity results}\label{convergency results} 
In this section, we will investigate the ergodicity of the average stochastic optimal control problem as $T\to\infty$ under the periodic setting.
For any $t\geq 0$ and $u(\cdot)\in\mathcal{U}_t$, we first from (\ref{estimates}) have the Lyapunov condition: there exist constants $\gamma\in(0,1)$ and $K>0$ such that
\begin{align*}
E\left|X_{t+\tau}^{t,x,u}\right|\leq \gamma|x|+K,
\end{align*}
and the local Doeblin condition: there exist a probability measure $\nu(\cdot)$ and a constant $\alpha\in(0,1)$ such that 
\begin{align*}
\inf_{x\in\mathcal{C}}P_u(t,t+\tau,x,\cdot)\geq \alpha\nu(\cdot),
\end{align*}
where $P_u(t,t+\tau,x,\Gamma)=P\{X_{t+\tau}^{t,x,u}\in\Gamma\}$ and $\mathcal{C}=\{x:|x|\leq R\}$ for some $R>2K/(1-\gamma)$. For the derivation of the local Doeblin condition, see \cite{FENG202367} and \cite{MENOZZI2021330} which implies the uniformity of $\alpha$ and $\nu(\cdot)$ in $u(\cdot)$.

Define a metric on $\mathbb{R}^d$ by
\begin{equation*}
 d_\beta(x,y)=\begin{cases}
0,&x=y,\\
2+\beta|x|+\beta|y|,&x\neq y,
  \end{cases}
\end{equation*}
and a seminorm
\begin{align*}
   \|\varphi\|_\beta=\sup_{x\neq y}\frac{|\varphi(x)-\varphi(y)|}{d_\beta(x,y)}.
\end{align*}
Denote by $C_{lip}(\mathbb{R}^d)$ the space of all Lipschitz continuous functions on $\mathbb{R}^d$. 
For any fixed $u(\cdot)\in\mathcal{U}_t$ and $s\geq t$, define an operator $T_s^{t,u(\cdot)}:C_{lip}(\mathbb{R}^d)\to C_{lip}(\mathbb{R}^d)$ by
\begin{align*} 
\varphi\mapsto T_s^{t,u(\cdot)}[\varphi](x)=E[\varphi(X_s^{t,x,u})],
\end{align*}
which satisfies the Chapman formula: for any $t\leq r\leq s$ and $\varphi\in C_{lip}(\mathbb{R}^d)$
\begin{align*}
T_s^{t,u(\cdot)}[\varphi](x)=T^{t,u(\cdot)}_r\circ T^{r,u(\cdot)}_s[\varphi](x).
\end{align*}

The contraction result under the Lyapunov condition and the local Doeblin condition obtained in \cite{hairer2011} also works for the control problems.
\begin{theorem}\label{yasuo}
Under Assumption \ref{assume}, there exist some constants $\bar{\alpha}\in(0,1)$ and $\beta>0$ such that for any $\varphi\in C_{lip}(\mathbb{R}^d)$
\begin{align*}
   \left\|T^{t,u(\cdot)}_{t+\tau}[\varphi]\right\|_\beta\leq\bar\alpha\|\varphi\|_\beta.
\end{align*}
Actually,  we can let $R$ be large enough such that $2K/R<1-\gamma$ and $\beta$ be small enough such that $\beta K<\alpha$, and set $\gamma_0=\gamma+2K/R<1$, $\alpha_0=\beta K$, then $\bar{\alpha}=(1-\alpha+\alpha_0)\vee (2+R\beta\gamma_0)/(2+R\beta)$.
\end{theorem}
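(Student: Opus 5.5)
The plan is to follow the Hairer--Mattingly argument (\cite{hairer2011}, ``yet another look at Harris' theorem''), applied to the one-period kernel $P_u(t,t+\tau,x,\cdot)$ with Lyapunov function $V(x)=|x|$. Write $\mathcal{P}\varphi(x)=T^{t,u(\cdot)}_{t+\tau}[\varphi](x)$. First I note that $\mathcal{P}$ maps $C_{lip}(\mathbb{R}^d)$ into itself by the second estimate in (\ref{estimates}). Since $\|\cdot\|_\beta$ is homogeneous and invariant under adding constants, and $\mathcal{P}$ is linear and preserves constants, it suffices to prove the following: for every $\varphi$ with $\|\varphi\|_\beta\le 1$ and every $x\neq y$,
\begin{align*}
|\mathcal{P}\varphi(x)-\mathcal{P}\varphi(y)|\le\bar\alpha\, d_\beta(x,y).
\end{align*}

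The first step is a normalization. If $\|\varphi\|_\beta\le1$, then $|\varphi(z)-\varphi(w)|\le 2+\beta|z|+\beta|w|$ for all $z,w$. Taking $c$ to be the midpoint of the range of $\varphi(z)-\beta|z|$ and $\varphi(z)+\beta|z|$ (the standard argument), one gets $|\varphi(z)-c|\le 1+\beta|z|$ for all $z$. Replacing $\varphi$ by $\varphi-c$, I will assume $|\varphi(z)|\le 1+\beta|z|$.

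Next I split into two cases according to $s:=|x|+|y|$.

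\emph{Case $s\ge R$.} By the Lyapunov condition,
\begin{align*}
|\mathcal{P}\varphi(x)-\mathcal{P}\varphi(y)|\le |\mathcal{P}\varphi(x)|+|\mathcal{P}\varphi(y)| \le 2+2\beta K+\beta\gamma s.
\end{align*}
Since $s\ge R$, we have $2\beta K\le \beta(2K/R)s$, so the bound is at most $2+\beta\gamma_0 s$. The ratio $(2+\beta\gamma_0 s)/(2+\beta s)$ is nonincreasing in $s$ because $\gamma_0<1$. Hence it is bounded by $(2+R\beta\gamma_0)/(2+R\beta)\le\bar\alpha$.

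\emph{Case $s<R$.} Here both $x,y\in\mathcal{C}$, so the local Doeblin condition lets me write $P_u(t,t+\tau,z,\cdot)=\alpha\nu+(1-\alpha)Q_z$ for $z\in\{x,y\}$, with $Q_z$ a probability measure. The $\nu$-parts cancel, giving
\begin{align*}
\mathcal{P}\varphi(x)-\mathcal{P}\varphi(y)=(1-\alpha)(Q_x\varphi-Q_y\varphi).
\end{align*}
Using $|\varphi|\le 1+\beta|\cdot|$ together with $(1-\alpha)Q_zV\le \mathcal{P}V(z)\le\gamma|z|+K$, I obtain
\begin{align*}
|\mathcal{P}\varphi(x)-\mathcal{P}\varphi(y)|\le 2(1-\alpha)+2\beta K+\beta\gamma s = 2(1-\alpha+\alpha_0)+\beta\gamma s.
\end{align*}
It remains to check that this is at most $\bar\alpha(2+\beta s)$. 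The constant term is handled by $\bar\alpha\ge 1-\alpha+\alpha_0$. For the linear term I need $\gamma\le\bar\alpha$. This holds because $(2+R\beta\gamma_0)/(2+R\beta)\ge\gamma_0\ge\gamma$, as a convex combination of $1$ and $\gamma_0$.

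Finally, $\bar\alpha<1$ follows from $\beta K<\alpha$ and $\gamma_0<1$, which fixes the choice of $R$ and $\beta$. The constants $\gamma,K,\alpha,\nu$ are uniform in $u(\cdot)$ and $t$ (by (\ref{estimates}) and the cited Doeblin results \cite{FENG202367,MENOZZI2021330}), so $\bar\alpha,\beta$ are uniform as well. I expect the main obstacle to be the near-diagonal case. There one must exploit the common component $\alpha\nu$ so that it cancels exactly, and bound the residual measures' first moments through $\mathcal{P}V$ rather than $Q_zV$ directly. The normalization of $\varphi$ by a constant is also essential there, since without it $|\varphi|$ is not controlled by $1+\beta V$.
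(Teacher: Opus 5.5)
Your proposal is correct and is exactly the Hairer--Mattingly argument that the paper invokes by citation to \cite{hairer2011}; the paper gives no proof of its own. Your normalization $|\varphi|\le 1+\beta|\cdot|$, the split according to whether $|x|+|y|\ge R$, and the cancellation of the common $\alpha\nu$ component on $\mathcal{C}$ reproduce the stated constant $\bar\alpha=(1-\alpha+\alpha_0)\vee(2+R\beta\gamma_0)/(2+R\beta)$.
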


Before deducing the ergodicity results, we first give an important lemma.
\begin{lemma}\label{lemma 2.5}
    For any $u(\cdot)\in\mathcal{A}_t$, we have $T_s^{t,u(\cdot)}[\varphi](x)=T_{s+\tau}^{t+\tau,u(\cdot)}[\varphi](x)$, i.e.
    \begin{align*}
        E[\varphi(X_s^{t,x,u})]=E[\varphi(X_{s+\tau}^{t+\tau,x,u})].
    \end{align*}
\end{lemma}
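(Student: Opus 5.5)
The plan is to show that the two processes $\{X_r^{t,x,u}\}_{r\in[t,s]}$ and $\{X_{r+\tau}^{t+\tau,x,u}\}_{r\in[t,s]}$ have the same law. Both are strong solutions of SDEs with the same coefficients (after a time shift) and the same initial point, so the equality of laws will follow from pathwise uniqueness via Yamada--Watanabe.

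Since $u(\cdot)\in\mathcal{A}_t$, the feedback function satisfies $u(r+\tau,y)=u(r,y)$ for all $r\ge t$ and $y\in\mathbb{R}^d$. By Assumption \ref{assume} 5), $b(r+\tau,\cdot,\cdot)=b(r,\cdot,\cdot)$ and likewise for $\sigma$. Hence the composed coefficients
\begin{align*}
\tilde b(r,y)=b(r,y,u(r,y)),\qquad \tilde\sigma(r,y)=\sigma(r,y,u(r,y))
\end{align*}
satisfy $\tilde b(r+\tau,y)=\tilde b(r,y)$ and $\tilde\sigma(r+\tau,y)=\tilde\sigma(r,y)$ for $r\ge t$.

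Next, set $Y_r:=X_{r+\tau}^{t+\tau,x,u}$ for $r\ge t$, and define the shifted Brownian motion $\tilde B_r:=B_{r+\tau}-B_{t+\tau}$. Changing variables in the integral equation for $X^{t+\tau,x,u}$ gives
\begin{align*}
Y_r=x+\int_t^r\tilde b(v+\tau,Y_v)\,dv+\int_t^r\tilde\sigma(v+\tau,Y_v)\,d\tilde B_v
=x+\int_t^r\tilde b(v,Y_v)\,dv+\int_t^r\tilde\sigma(v,Y_v)\,d\tilde B_v .
\end{align*}
The one point needing care is the stochastic integral: one checks that $\int_{t+\tau}^{r+\tau}g(v)\,dB_v=\int_t^r g(v+\tau)\,d\tilde B_v$ for the shifted filtration. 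This is standard, first for simple integrands and then by approximation.

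So $Y$ solves the same SDE as $X^{t,x,u}$ on $[t,\infty)$, but driven by the Brownian motion $\tilde B$ (starting at time $t$) instead of $B$. The composed coefficients $\tilde b,\tilde\sigma$ are Lipschitz in $y$ uniformly in $r$ by Assumption \ref{assume} 2), and they are measurable in $r$ since $u$ is c\`adl\`ag in $r$. Therefore pathwise uniqueness holds, and by Yamada--Watanabe the law of the solution is uniquely determined by the coefficients and the initial data. Equivalently, the strong solution is $F(x,B_\cdot-B_t)$ for one fixed measurable functional $F$, and $\tilde B$ and $B_{\cdot}-B_t$ have the same law. Hence $X_s^{t,x,u}\overset{d}{=}Y_s=X_{s+\tau}^{t+\tau,x,u}$, and taking expectations of $\varphi\in C_{lip}(\mathbb{R}^d)$ gives the claim. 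The integrability needed for the expectations follows from the linear growth of $\varphi$ together with \eqref{estimates}.

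The main obstacle is only the bookkeeping: confirming that periodicity of the feedback function is needed, and holds, on the whole interval $[t,s]$. This is exactly why the lemma is stated for $\mathcal{A}_t$ rather than $\mathcal{U}_t$. For $u\in\mathcal{U}_t$ periodicity only starts at $(n+1)\tau$, so the identity could fail on the initial stretch.
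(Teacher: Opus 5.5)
Your proposal is correct and takes essentially the same route as the paper. The paper shifts time by $\tau$ via the Wiener shift $\theta_\tau$ and uses periodicity to rewrite the equation for $X^{t+\tau,x,u}_{\cdot+\tau}$ as the original SDE driven by $\theta_\tau B$. Uniqueness then gives $X_s^{t,x,u}\circ\theta_\tau=X_{s+\tau}^{t+\tau,x,u}$, and $\theta_\tau P=P$ finishes the argument, which is your functional-representation (Yamada--Watanabe) argument in different language. You also check explicitly that the feedback $u$ must itself be $\tau$-periodic on $[t,\infty)$, which is why the lemma is stated for $\mathcal{A}_t$; the paper uses this only implicitly.
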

\begin{proof}
Define a metric dynamical system $\theta:\mathbb{R}\times\Omega\to\Omega$ as $\theta_s\omega_\cdot=\omega_{s+\cdot}-\omega_s$, it satisfies $\theta_s P=P$ for all $s\in\mathbb{R}$.
Consider a controlled SDE 
\begin{align*}
\begin{cases}
  dX_s=b(s,X_s,u(s,X_s))ds+\sigma(s,X_s,u(s,X_s))dB_s,~~s\geq t+\tau,\\
  X_{t+\tau}=x.
\end{cases}
\end{align*} From the periodicity of coefficients in time, we have
\begin{align*}
X_{s+\tau}=&x+\int_{t+\tau}^{s+\tau}b(r,X_r,u(r,X_r))dr+\int_{t+\tau}^{s+\tau}\sigma(r,X_r,u(r,X_r))dB_r\\
=&x+\int_t^sb(r,X_{r+\tau},u(r,X_{r+\tau}))dr+\int_t^s\sigma(r,X_{r+\tau},u(r,X_{r+\tau}))d(\theta_\tau B)_r.
\end{align*}
By the uniqueness of solution of the above SDE, we deduce that
\begin{align*}
X_s^{t,x,u}\circ\theta_\tau=X_{s+\tau}^{t+\tau,x,u}.
\end{align*}
It follows that for any $s\geq t$
\begin{align*}
  T_{s+\tau}^{t+\tau,u(\cdot)}[\varphi](x)=T_s^{t,u(\cdot)}[\varphi](x).
\end{align*}  
\end{proof}

Theorem \ref{yasuo} leads to the convergence of iterations of the controlled Markov process with the help of Lemma \ref{lemma 2.5} and the Chapman formula. To see this, for any $u(\cdot)\in\mathcal{A}_t$ and $n\geq 1$ 
\begin{align*}
     \left\|T^{t,u(\cdot)}_{t+n\tau}[\varphi]\right\|_\beta
   =&\sup_{x\neq y}\frac{\left|T^{t,u(\cdot)}_{t+n\tau}[\varphi](x)-T^{t,u(\cdot)}_{t+n\tau}[\varphi](y)\right|}{d_\beta(x,y)}\\
   =&\sup_{x\neq y}\frac{\left|T^{t,u(\cdot)}_{t+\tau}\circ T^{t+\tau,u(\cdot)}_{t+n\tau}[\varphi](x)-T^{t,u(\cdot)}_{t+\tau}\circ T^{t+\tau,u(\cdot)}_{t+n\tau}[\varphi](y)\right|}{d_\beta(x,y)}\\
\leq&\bar\alpha\sup_{x\neq y}\frac{\left|T^{t+\tau,u(\cdot)}_{t+n\tau}[\varphi](x)-T^{t+\tau,u(\cdot)}_{t+n\tau}[\varphi](y)\right|}{d_\beta(x,y)}\\
=&\bar\alpha\sup_{x\neq y}\frac{\left|T^{t,u(\cdot)}_{t+(n-1)\tau}[\varphi](x)-T^{t,u(\cdot)}_{t+(n-1)\tau}[\varphi](y)\right|}{d_\beta(x,y)}\\
=&\bar\alpha\left\|T^{t,u(\cdot)}_{t+(n-1)\tau}[\varphi]\right\|_\beta.
\end{align*}
Thus, by iteration we have
\begin{align*}
  \left\|T^{t,u(\cdot)}_{t+n\tau}[\varphi]\right\|_\beta\leq\bar\alpha^n\|\varphi\|_\beta.
\end{align*}

Since the coefficients are assumed to be periodic, we only consider the case where the initial time $t\in[0,\tau)$.
For any $s\in[\tau,2\tau)$, $n\geq0$, and $u(\cdot)\in\mathcal{U}_t$
\begin{align*}
 &\sup_{\varphi:\|\varphi\|_\beta\leq 1}\left|T_{s+(n+1)\tau}^{t,u(\cdot)}[\varphi](x)-T_{s+n\tau}^{t,u(\cdot)}[\varphi](x)\right|\\
\leq&\sup_{\varphi:\|\varphi\|_\beta\leq 1}E\left|T^{s,u(\cdot)}_{s+(n+1)\tau}[\varphi](X_s^{t,x,u})-T^{s,u(\cdot)}_{s+n\tau}[\varphi](X_s^{t,x,u})\right|.
\end{align*}
However, for any $y\in\mathbb{R}^d$,
\begin{align*}
\left|T^{s,u(\cdot)}_{s+(n+1)\tau}[\varphi](y)-T^{s,u(\cdot)}_{s+n\tau}[\varphi](y)\right|
\leq&E\left|T^{s+\tau,u(\cdot)}_{s+(n+1)\tau}[\varphi](X_{s+\tau}^{s,y,u})-T^{s+\tau,u(\cdot)}_{s+(n+1)\tau}[\varphi](y)\right|\\
\leq&\bar\alpha^n\|\varphi\|_\beta\left(2+\beta E|X_{s+\tau}^{s,y,u}|+\beta |y|\right)\\
\leq&\bar{\alpha}^n\|\varphi\|_\beta C(1+|y|),
\end{align*}
which implies that 
\begin{align}\label{yasuo 0}
 \sup_{\varphi:\|\varphi\|_\beta\leq 1}\left|T_{s+(n+1)\tau}^{t,u(\cdot)}[\varphi](x)-T_{s+n\tau}^{t,u(\cdot)}[\varphi](x)\right|
\leq\bar{\alpha}^nC(1+|x|).
\end{align}
Hence, we can obtain the result that there exists a constant $\vartheta[\varphi]$, which in fact is the integral of $\varphi$ with respect to a probability measure $\vartheta$, such that
\begin{align*}
\sup_{\varphi:\|\varphi\|_\beta\leq1}\left|T_{s+n\tau}^{t,u(\cdot)}[\varphi](x)-\vartheta[\varphi]\right|\leq\bar{\alpha}^nC(1+|x|)\to0,~~as~n\to\infty.
\end{align*}
It is worth pointing out that $\vartheta$ is independent of the initial conditions, including the initial states and the initial times. In fact, for different $x$ and $y$
\begin{align*}
|\vartheta[\varphi](x)-\vartheta[\varphi](y)|=&\lim_{n\to\infty}\left|T^{t,u(\cdot)}_{s+n\tau}[\varphi](x)-T^{t,u(\cdot)}_{s+n\tau}[\varphi](y)\right|\\
\leq&\lim_{n\to\infty}\bar\alpha^n\|\varphi\|_\beta\left(2+\beta E|X_s^{t,x,u}|+\beta E|X_s^{t,y,u}|\right)\\
\leq&\lim_{n\to\infty}\bar\alpha^n\|\varphi\|_\beta C(1+|x|+|y|)=0,
\end{align*}
and for any $t'\in[t,t+\tau)$,
\begin{align*}
|\vartheta_t[\varphi](x)-\vartheta_{t'}[\varphi](x)|=&\lim_{n\to\infty}\left|T^{t,u(\cdot)}_{s+n\tau}[\varphi](x)-T^{t',u(\cdot)}_{s+n\tau}[\varphi](x)\right|\\
\leq&\lim_{n\to\infty} E\left|T^{t',u(\cdot)}_{s+n\tau}[\varphi](X_{t'}^{t,x,u})-T^{t',u(\cdot)}_{s+n\tau}[\varphi](x)\right|\\
\leq&\lim_{n\to\infty}\bar\alpha^{n-1}\|\varphi\|_\beta C(1+|x|)=0.
\end{align*}

Now, we want to extend the results to the functions depending on the time variables and control processes. We in fact have proved the following, for the running function $f$ and any $s\in[\tau,2\tau)$
\begin{align}\label{invariant measure}
\left|E[f(s+n\tau,X_{s+n\tau}^{t,x,u},u(s+n\tau,X_{s+n\tau}^{t,x,u}))]-\rho_s^u[f] \right| \leq\bar{\alpha}^nC(1+|x|)\to0,~~as~n\to\infty,
\end{align}
where $\rho_s^u[f]:=\vartheta[f(s,\cdot,u(s,\cdot))]$ is $\tau$-periodic.

By continuity, we find that the function $s\mapsto\rho^u_s[f]$ is right-continuous as
\begin{align*}
 &|\rho^u_s[f]-\rho^u_r[f]|\\
=&\left|\lim_{n\to\infty} E\left[f(s+n\tau,X_{s+n\tau}^{t,x,u},u(s+n\tau,X_{s+n\tau}^{t,x,u}))\right]\right.\\
&~~~~~~~~~~\left.-\lim_{n\to\infty} E\left[f(r+n\tau,X_{r+n\tau}^{t,x,u},u(r+n\tau,X_{r+n\tau}^{t,x,u}))\right]\right|\\
\leq&\lim_{n\to\infty} E\left|f(s+n\tau,X_{s+n\tau}^{t,x,u},u(s+n\tau,X_{s+n\tau}^{t,x,u}))-f(r+n\tau,X_{r+n\tau}^{t,x,u},u(r+n\tau,X_{r+n\tau}^{t,x,u}))\right|\\
 \leq&\lim_{n\to\infty} E\left|f(s+n\tau,X_{s+n\tau}^{t,x,u},u(s+n\tau,X_{s+n\tau}^{t,x,u}))-f(s+n\tau,X_{r+n\tau}^{t,x,u},u(s+n\tau,X_{r+n\tau}^{t,x,u}))\right|\\
     +&\lim_{n\to\infty} E\left|f(s+n\tau,X_{r+n\tau}^{t,x,u},u(s+n\tau,X_{r+n\tau}^{t,x,u}))-f(r+n\tau,X_{r+n\tau}^{t,x,u},u(r+n\tau,X_{r+n\tau}^{t,x,u}))\right|\\
 \to&0,~as~~s\downarrow r,
\end{align*}
and 
\begin{align*}
    |\rho_r^u[f]|&\leq\lim_{n\to\infty}E\left|f(r+n\tau,X_{r+n\tau}^{t,x,u},u(r+n\tau,X_{r+n\tau}^{t,x,u}))\right|\\
    &\leq\lim_{n\to\infty}C(1+E|X_{r+n\tau}^{t,x,u}|)\leq C,
\end{align*}where $C>0$ is independent of the control process $u(\cdot)$.

Based on the above arguments, we can prove the following result.
\begin{theorem}\label{exist}
 Under Assumption \ref{assume}, for each $u(\cdot)\in\mathcal{U}_t$, the integration
\begin{align*}
\rho^u[f]:=\frac{1}{\tau}\int_0^\tau\rho^u_s[f]ds,
\end{align*}
satisfies
\begin{equation}\label{rho}
  \rho^u[f]=\lim_{T\to\infty}\frac{1}{T}\int_t^{t+T}E[f(s,X_s^{t,x,u},u(s,X_s^{t,x,u}))]ds,
\end{equation}
and 
\begin{align}\label{dominate}
    \left|\lim_{n\to\infty}\int_t^{n\tau}(E[f(s,X_s^{t,x,u},u(s,X_s^{t,x,u}))]-\rho^u[f])ds\right|\leq C(1+|x|).
\end{align}
\end{theorem}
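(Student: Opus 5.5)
Write $g(s):=E[f(s,X_s^{t,x,u},u(s,X_s^{t,x,u}))]$ and extend $\rho^u_s[f]$ $\tau$-periodically to all $s\ge0$. The approach is to use the exponential convergence (\ref{invariant measure}) on each period and then sum a geometric series. Two earlier facts will be used repeatedly. The linear growth of $f$ together with (\ref{estimates}) gives $|g(s)|\le C(1+|x|)$ for all $s\ge t$, and $\sup_s|\rho^u_s[f]|\le C$, with $C$ independent of $u$. Also $s\mapsto\rho^u_s[f]$ is right-continuous and bounded, so the integral defining $\rho^u[f]$ exists; right-continuity and the convergence below also give measurability.

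\textbf{Step 1 (summable error).} Fix $t\in[0,\tau)$ and split $[t,\infty)$ into $[t,2\tau)$ and the intervals $[k\tau,(k+1)\tau)$ for $k\ge2$. Any $r\ge2\tau$ can be written uniquely as $r=s+n\tau$ with $s\in[\tau,2\tau)$ and $n\ge1$. By (\ref{invariant measure}), $|g(r)-\rho^u_r[f]|\le\bar\alpha^{n}C(1+|x|)$, and this bound is uniform in $s\in[\tau,2\tau)$, because the constant in (\ref{yasuo 0}) comes only from the moment bound (\ref{estimates}). Integrating over each period gives
\begin{align*}
\int_t^{\infty}\left|g(r)-\rho^u_r[f]\right|dr\leq \int_t^{2\tau}\left(|g(r)|+C\right)dr+\sum_{n\ge1}\tau\bar\alpha^nC(1+|x|)\leq C(1+|x|).
\end{align*}
Hence $\int_t^{n\tau}(g(r)-\rho^u_r[f])dr$ converges absolutely as $n\to\infty$, with limit bounded by $C(1+|x|)$.

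\textbf{Step 2 (proving (\ref{dominate})).} By periodicity, $\int_t^{n\tau}\rho^u_r[f]\,dr=(n\tau-t)\rho^u[f]-\int_t^{\tau}\big(\rho^u_r[f]-\rho^u[f]\big)dr$, since each full period $[k\tau,(k+1)\tau)$ contributes exactly $\tau\rho^u[f]$. The correction term is bounded by $2C\tau$. Therefore
\begin{align*}
\int_t^{n\tau}\left(g(r)-\rho^u[f]\right)dr=\int_t^{n\tau}\left(g(r)-\rho^u_r[f]\right)dr-\int_t^\tau\left(\rho^u_r[f]-\rho^u[f]\right)dr.
\end{align*}
The first term converges by Step 1 and the second is a constant, so the limit exists and is bounded by $C(1+|x|)$. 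This is (\ref{dominate}).

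\textbf{Step 3 (proving (\ref{rho})).} For general $T$, write $t+T=n\tau+\delta$ with $\delta\in[0,\tau)$. Then
\begin{align*}
\frac1T\int_t^{t+T}g(r)\,dr-\rho^u[f]=\frac1T\int_t^{n\tau}\left(g(r)-\rho^u[f]\right)dr+\frac1T\int_{n\tau}^{n\tau+\delta}\left(g(r)-\rho^u[f]\right)dr-\frac{t}{T}\rho^u[f]\cdot 0.
\end{align*}
More precisely, the bookkeeping of the interval $[t,t+T]$ against $[t,n\tau]$ leaves only bounded pieces of length at most $\tau$. The first term is $O((1+|x|)/T)$ by Step 2. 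The second is at most $\tau C(1+|x|)/T$. Both vanish as $T\to\infty$, so the limit exists as a genuine limit rather than a limsup, and (\ref{rho}) follows.

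\textbf{Main obstacle.} The key point is that the rate in (\ref{invariant measure}) is uniform in $s\in[\tau,2\tau)$, and that this holds for $u\in\mathcal{U}_t$, which is only periodic after $(n_0+1)\tau$. I would handle the second issue by first applying the argument from time $2\tau$ onward, where $u$ is periodic and Lemma \ref{lemma 2.5} applies. This is the reason for the choice $s\in[\tau,2\tau)$. The initial segment is then absorbed through the Markov property and the moment bound $E|X_{2\tau}^{t,x,u}|\le C(1+|x|)$. The remaining steps are routine bookkeeping.
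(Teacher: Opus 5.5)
Your argument is correct. Steps 1--2 are essentially the paper's proof of (\ref{dominate}): split off an initial segment, replace $\rho^u[f]$ by $\rho^u_r[f]$ on whole periods, and sum the geometric bound coming from (\ref{invariant measure}).

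Where you differ is (\ref{rho}). The paper proves it independently. It uses only that the convergence in (\ref{invariant measure}) is uniform in $s$, which gives uniform Ces\`aro convergence of $\frac1n\sum_{k=0}^{n-1}E[f(s+k\tau,\dots)]$ to $\rho^u_s[f]$. It then integrates over one period for $T=n\tau$ and leaves general $T$ to ``standard arguments''. You instead obtain (\ref{rho}) as a corollary of the uniform-in-$n$ bound on $\int_t^{n\tau}(g-\rho^u[f])\,dr$.

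Your route needs the summable (exponential) rate, which the paper's Ces\`aro argument for (\ref{rho}) does not use. In exchange it gives:
\begin{itemize}
\item an explicit $O((1+|x|)/T)$ rate;
\item an explicit treatment of non-multiples of $\tau$;
\item two points the paper uses silently: that the limit in (\ref{dominate}) actually exists (by absolute convergence), and the periodicity identity that justifies swapping $\rho^u[f]$ for $\rho^u_r[f]$ over whole periods.
\end{itemize}

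Two small slips. First, the identity should read $\int_t^{n\tau}\rho^u_r[f]\,dr=(n\tau-t)\rho^u[f]+\int_t^{\tau}(\rho^u_r[f]-\rho^u[f])\,dr$, with a plus rather than a minus. This is harmless, since you only use that the correction is a bounded constant. Second, the term $-\frac{t}{T}\rho^u[f]\cdot 0$ in Step 3 should simply be deleted. The decomposition $\int_t^{t+T}=\int_t^{n\tau}+\int_{n\tau}^{n\tau+\delta}$ is already exact.

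Finally, for $t\in[0,\tau)$ a control in $\mathcal{U}_t$ is already periodic after $\tau$. So you can start at $s\in[\tau,2\tau)$ directly, as (\ref{invariant measure}) does; postponing to $2\tau$ is unnecessary, though not wrong.
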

\begin{proof} 
Based on (\ref{invariant measure}), we see that for any $\varepsilon>0$, there exists a uniform positive integer $N_\varepsilon$ such that for any $s\in[t,t+\tau)$ and $n>N_\varepsilon$
\begin{align*}
\left|\frac{1}{n}\sum_{k=0}^{n-1} E[f(s+k\tau,X_{s+k\tau}^{t,x,u},u(s+k\tau,X_{s+k\tau}^{t,x,u}))]-\rho^u_s[f]\right|<\varepsilon.
\end{align*}
Hence, for any $n>N_\varepsilon$, we get
\begin{align*}
 &\left|\frac{1}{n}\int_t^{t+n\tau} E[f(s,X_s^{t,x,u},u(s,X_s^{t,x,u}))]ds-\tau\rho^u[f]\right|\\
=&\left|\frac{1}{n}\int_t^{t+\tau}\sum_{k=0}^{n-1} E[f(s+k\tau,X_{s+k\tau}^{t,x,u},u(s+k\tau,X_{s+k\tau}^{t,x,u}))]ds-\int_t^{t+\tau}\rho^u_s[f]ds\right|\\
<&\tau\varepsilon.
\end{align*}
Then, the first result of this theorem follows from some standard arguments.

For (\ref{dominate}),
\begin{align*}
    &\left|\lim_{n\to\infty}\int_t^{n\tau}(E[f(s,X_s^{t,x,u},u(s,X_s^{t,x,u}))]-\rho^u[f])ds\right|\\
    \leq&\int_t^\tau\left|E[f(s,X_s^{t,x,u},u(s,X_s^{t,x,u}))]-\rho^u[f]\right|ds\\
    &~~~~~+\lim_{n\to\infty}\int_\tau^{n\tau}\left|E[f(s,X_s^{t,x,u},u(s,X_s^{t,x,u}))]-\rho^u_s[f]\right|ds\\
    \leq&C(1+|x|)+\frac{1}{1-\bar\alpha}C(1+|x|),
\end{align*}
where the constant $C$ is independent of the control process $u(\cdot)$.
\end{proof}

As a consequence, we have the following results from the Lipschitz continuity of $f$ in $x$. 
\begin{theorem}\label{theorem ergodic}
Under Assumption \ref{assume}, there exists a constant $\rho$ such that 
\begin{align*}
\rho=\sup_{u(\cdot)\in\mathcal{U}_t}\lim_{T\to\infty}\frac{1}{T}E\int_t^{t+T}f(s,X^{t,x,u}_s,u(s,X^{t,x,u}_s))ds.
\end{align*}
\end{theorem}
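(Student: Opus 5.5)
By Theorem \ref{exist}, for every $t$, $x$ and every $u(\cdot)\in\mathcal{U}_t$ the limit in (\ref{rho}) exists and equals $\rho^u[f]=\frac{1}{\tau}\int_0^\tau\rho^u_s[f]\,ds$. Here $\rho^u_s[f]=\vartheta[f(s,\cdot,u(s,\cdot))]$, and $\vartheta$ does not depend on the initial state $x$ or on the initial time. The plan is to define, for $t\in[0,\tau)$ and $x\in\mathbb{R}^d$,
\begin{align*}
\rho(t,x):=\sup_{u(\cdot)\in\mathcal{U}_t}\rho^u[f],
\end{align*}
and to show that this quantity depends neither on $x$ nor on $t$. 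It is finite because $|\rho^u_s[f]|\le C$ with $C$ independent of $u(\cdot)$, as shown before Theorem \ref{exist}.

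\emph{Independence of $x$.} Fix $t$ and take $x\neq y$. For a fixed control $u(\cdot)\in\mathcal{U}_t$, Theorem \ref{exist} gives the limit $\rho^u[f]$ both for $X^{t,x,u}$ and for $X^{t,y,u}$. I will show directly that the two limits agree, using the convergence (\ref{invariant measure}) to the same $\vartheta$ for both initial states (established above). An alternative route: $u(\cdot)$ is Lipschitz in $x$, so $f(s,\cdot,u(s,\cdot))$ is Lipschitz with constant $C_f$ by Assumption \ref{assume}, and therefore lies in $C_{lip}$ with finite $\|\cdot\|_\beta$. The contraction estimate $\bar\alpha^n C(1+|x|+|y|)$ then makes the difference of the Cesàro averages vanish. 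In either case $J$ has the same limit from $x$ and from $y$ for every control, and taking the supremum gives $\rho(t,x)=\rho(t,y)$.

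\emph{Independence of $t$.} This is the main obstacle, because the admissible sets $\mathcal{U}_t$ change with $t$. I would first handle $t\in[0,\tau)$ and then use periodicity via Lemma \ref{lemma 2.5}. Take $t<t'$ in $[0,\tau)$; both classes consist of controls in $\mathcal{A}$ that are $\tau$-periodic after time $\tau$.
\begin{enumerate}
\item Given $u\in\mathcal{U}_{t'}$, I extend it arbitrarily on $[t,t')$, for instance by $u(t',\cdot)$, which keeps the càdlàg and Lipschitz properties. This gives $\tilde u\in\mathcal{U}_t$.
\item By the Markov property, $X^{t,x,\tilde u}$ restricted to $[t',\infty)$ is $X^{t',\xi,u}$ with $\xi=X^{t,x,\tilde u}_{t'}$.
\item The limit $\rho^u[f]$ depends only on the periodic tail of $u$, because $\vartheta$ is independent of the initial time and state. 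Hence $\rho^{\tilde u}[f]=\rho^u[f]$, and so $\rho(t,\cdot)\ge\rho(t',\cdot)$.
\item Conversely, the restriction to $[t',\infty)$ of any $u\in\mathcal{U}_t$ lies in $\mathcal{U}_{t'}$ and has the same value $\rho^u[f]$, which gives the reverse inequality.
\end{enumerate}
For general $t\ge\tau$, the shift identity of Lemma \ref{lemma 2.5} together with the periodicity of the coefficients reduces the problem to $t\in[0,\tau)$.

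The key point to verify carefully is that $\rho^u[f]$ is a functional of the periodic tail of $u$ alone. I would prove this by writing the Cesàro average over $[t,t+T]$ as a bounded contribution from $[t,\tau]$ plus an average started at time $\tau$ from the random state $X_\tau$. Since $E|X_\tau|^2<\infty$ by (\ref{estimates}), the bound (\ref{dominate}) integrated against the law of $X_\tau$ shows that the first part is $O(1/T)$. Setting $\rho:=\rho(t,x)$ then completes the argument.
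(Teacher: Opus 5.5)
Your proposal is correct and follows the paper's route. The paper obtains Theorem \ref{theorem ergodic} directly from Theorem \ref{exist}: since $f(s,\cdot,u(s,\cdot))$ is Lipschitz, the contraction estimates apply, and the limit $\rho^u[f]$ (built from $\vartheta$) is independent of initial state and time, so $\rho:=\sup_u\rho^u[f]$ is a constant. Your treatment of the $t$-dependence of $\mathcal{U}_t$ (for $t\in[0,\tau)$ these classes actually coincide, since the feedback functions are defined on $[0,\infty)\times\mathbb{R}^d$ and are periodic after $\tau$, and $\rho^u[f]$ depends only on the periodic tail of $u$) just makes explicit what the paper leaves implicit.
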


This method also works for the autonomous system. For each fixed $u(\cdot)\in\mathcal{U}$, we can artificially define a period $\tau>0$, and then prove that there exists a function $s\mapsto\rho_s^u$ which is independent of the initial conditions such that
  \begin{align*}
      \rho_s^u=\lim_{n\to\infty}E[f(X_{s+n\tau}^{x,u},u(X_{s+n\tau}^{x,u}))].
  \end{align*}
  In addition, we can prove that $\rho_s^u=\rho^u_t$ for any $s,t\in[0,\tau)$ by homogeneity. Hence, we can obtain from (\ref{rho}) that $\rho^u=\frac{1}{\tau}\int_0^\tau\rho_s^uds$ trivially exists and there exists a $\rho$ such that 
  \begin{align*}
  \rho=\sup_{u(\cdot)\in\mathcal{U}}\rho^u=\sup_{u(\cdot)\in\mathcal{U}}\lim_{T\to\infty}\frac{1}{T}E\int_0^Tf(X_s^{x,u},u(X_s^{x,u}))ds.
  \end{align*}

\section{Dynamic programming principle}\label{dynamic programming principle}
In Section \ref{convergency results}, we have proved that the average stochastic optimal control problem is ergodic. The aim of the next step is to find the optimal controls of the stochastic control problem. Note that it is not easy to find the dynamic programming principle satisfied by the value function (\ref{value function}). We should point out that Theorem \ref{theorem ergodic} does not imply the existence of optimal controls. For any $0<\varepsilon<1$, there exists $u^\varepsilon(\cdot)\in\mathcal{U}_t$ such that 
\begin{align*}
    \rho^{u^\varepsilon}:=\rho^\varepsilon>\rho-\varepsilon.
\end{align*}
To treat the possibility that the optimal controls do not exist, we construct an auxiliary $\tau$-periodic function as
\begin{align*}
  J_\varepsilon(t,x,u(\cdot))=\lim_{n\to\infty}\int_t^{n\tau} E\left[f(s,X_s^{t,x,u},u(s,X_s^{t,x,u}))-\rho^\varepsilon\right]ds,
\end{align*}
and the auxiliary value function is 
\begin{align*}
W_\varepsilon(t,x)=\sup_{u(\cdot)\in\mathcal{U}_t^\varepsilon}J_\varepsilon(t,x,u(\cdot)),
\end{align*}
where $\mathcal{U}_t^\varepsilon\subset\mathcal{U}_t$ is the collection of all $u(\cdot)\in\mathcal{U}_t$ such that $\rho^u\leq\rho^\varepsilon$. 
Note, $J_\varepsilon(t,x,u(\cdot))$ is only finite in a subset $\mathcal{U}_t^{\varepsilon,0}:=\{u(\cdot)\in\mathcal{U}_t^\varepsilon;\rho^u=\rho^\varepsilon\}$, otherwise, $J_\varepsilon(t,x,u(\cdot))=-\infty$. 
This can be seen since for any $u(\cdot)\in\mathcal{U}_t$
\begin{align*}
J_\varepsilon(t,x,u(\cdot))=&\lim_{n\to\infty}\int_t^{n\tau}\left(E[f(s,X_s^{t,x,u},u(s,X_s^{t,x,u}))]-\rho^u\right)ds+\int_t^\infty(\rho^u-\rho^\varepsilon)ds,
\end{align*}
which means
\begin{align*}
-C(1+|x|)+\int_t^\infty(\rho^u-\rho^\varepsilon)ds\leq J_\varepsilon(t,x,u(\cdot))
\leq C(1+|x|)+\int_t^\infty(\rho^u-\rho^\varepsilon)ds.
\end{align*}
So, if $u(\cdot)\in\mathcal{U}_t^\varepsilon\backslash\mathcal{U}_t^{\varepsilon,0}$, we have 
\begin{align*}
    \int_t^\infty (\rho^u-\rho^\varepsilon) ds=-\infty.
\end{align*}
Thus, the auxiliary function can be rewritten as 
\begin{align}\label{4.1a}
  W_\varepsilon(t,x)=\sup_{u(\cdot)\in\mathcal{U}_t^{\varepsilon,0}}J_\varepsilon(t,x,u(\cdot)).
\end{align}
Hence, we have the linear growth of $W_\varepsilon(t,x)$ in $\mathbb{R}^d$ that
\begin{align}\label{linear}
    |W_\varepsilon(t,x)|\leq C(1+|x|).
\end{align}

\begin{remark}
If the optimal controls exist, one can choose $u^*(\cdot)$ with $\rho^{u^*}=\rho$ and construct a function as
\begin{align*}
W(t,x)=\sup_{u(\cdot)\in\mathcal{U}_t}\lim_{n\to\infty}\int_t^{n\tau}(E[f(s,X_s^{t,x,u},u(s,X_s^{t,x,u}))]-\rho)ds.
\end{align*}
\end{remark}

In the following, we will give the continuity of the auxiliary value function $W_\varepsilon(t,x)$ with respect to $x$ and $t$, respectively. We only consider the case that $t\in[0,\tau)$. 

\begin{lemma} 
Under Assumption \ref{assume}, the $\tau$-periodic function $W_\varepsilon(t,x)$ is continuous in intervals $\{[k\tau,(k+1)\tau)\times\mathbb{R}^d,k\geq 0\}$, and has a left-limit at $k\tau$ for each $k\geq1$.
\end{lemma}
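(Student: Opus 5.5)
We need to show that $W_\varepsilon$ is continuous in $x$, continuous in $t$ within each $[k\tau,(k+1)\tau)$, and has a left limit at $k\tau$. By $\tau$-periodicity it suffices to work with $t\in[0,\tau)$. Throughout, the key is that every bound must be uniform over $u(\cdot)\in\mathcal{U}_t^{\varepsilon,0}$, so that it passes through the supremum in (\ref{4.1a}).

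\emph{Continuity in $x$.} Fix $t$ and $x,y$. For $u(\cdot)\in\mathcal{U}_t^{\varepsilon,0}$ we have $\rho^u=\rho^\varepsilon$. Split the difference $J_\varepsilon(t,x,u(\cdot))-J_\varepsilon(t,y,u(\cdot))$ at time $t+N\tau$ for some $N\ge1$.
\begin{itemize}
\item On $[t,t+N\tau]$, use the Lipschitz property of $f(s,\cdot,u(s,\cdot))$ (Assumption \ref{assume} 2)) together with the second estimate in (\ref{estimates}). This gives a bound $C_N|x-y|$ with $C_N$ independent of $u$.
\item On $[t+N\tau,\infty)$, write each integrand as $E f-\rho^u_s[f]$ (recall $\rho^u_s$ averages to $\rho^u$ over a period). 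Then use (\ref{invariant measure}), i.e. the $\bar\alpha^n C(1+|x|)$ decay. Summing over periods bounds the tail by $C\bar\alpha^N(1+|x|+|y|)/(1-\bar\alpha)$, uniformly in $u$.
\end{itemize}
Taking the supremum over $u$ gives
\[
|W_\varepsilon(t,x)-W_\varepsilon(t,y)|\le C_N|x-y|+C\bar\alpha^N(1+|x|+|y|).
\]
Letting $y\to x$ and then $N\to\infty$ yields continuity. This also gives local uniformity in $x$, which we need for joint continuity.

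\emph{Continuity in $t$ within $[0,\tau)$.} Take $t<t'<\tau$. Since controls in $\mathcal{U}_t$ need only be periodic after $\tau$, we can compare controls starting at $t$ and at $t'$ directly.
\begin{itemize}
\item Restricting a control in $\mathcal{U}_t^{\varepsilon,0}$ to $[t',\infty)$ gives an element of $\mathcal{U}_{t'}^{\varepsilon,0}$, since $\rho^u$ depends only on the periodic regime.
\item Conversely, a control in $\mathcal{U}_{t'}^{\varepsilon,0}$ extends to $[t,t')$ by any fixed value.
\end{itemize}
For a common control, the Markov property gives
\[
J_\varepsilon(t,x,u)=E\int_t^{t'}(f-\rho^\varepsilon)\,ds+E\,J_\varepsilon(t',X^{t,x,u}_{t'},u).
\]
The first term is $O(|t'-t|(1+|x|))$. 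For the second, compare $X^{t,x,u}_{t'}$ with $x$, using $E|X^{t,x,u}_{t'}-x|^2\le C(1+|x|^2)(t'-t)$. The $x$-modulus from the previous step is uniform in $u$, and this gives
\[
|E J_\varepsilon(t',X_{t'},u)-J_\varepsilon(t',x,u)|\to0
\]
uniformly in $u$. Applying the resulting two-sided inequalities to $\delta$-optimal controls yields $|W_\varepsilon(t,x)-W_\varepsilon(t',x)|\to0$ as $t'-t\to0$. Combined with the local uniform continuity in $x$, this gives joint continuity on $[0,\tau)\times\mathbb{R}^d$.

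\emph{Left limit at $\tau$.} As $t\uparrow\tau$, the same estimates show that $\{W_\varepsilon(t,x)\}$ is Cauchy: the differences are controlled by $O(|t-t'|)$ plus the uniform modulus. Hence a left limit exists. It need not equal $W_\varepsilon(\tau,x)=W_\varepsilon(0,x)$, because the admissible class jumps at $\tau$: $\mathcal{U}_t$ for $t<\tau$ allows free behaviour until $\tau$, while $\mathcal{U}_\tau$ requires periodicity until $2\tau$.

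\emph{Main obstacle.} The hardest step is the $t$-comparison, namely checking that restriction and extension of controls respect the classes $\mathcal{U}^{\varepsilon,0}$ and the càdlàg/Lipschitz requirements of $\mathcal{A}$. The estimates themselves must stay uniform in $u$, and this rests on the uniformity of $\bar\alpha$ and $C$ noted after Theorem \ref{yasuo}.
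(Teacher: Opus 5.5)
Your proof is correct. For continuity in $x$ and in $t$ on $[0,\tau)$ it is essentially the paper's argument: split at a multiple of $\tau$, use the Lipschitz/Gronwall estimate on the initial segment and the geometric $\bar\alpha^n$ decay on the tail, and compare $t$ with $t'$ by restricting (or extending) controls. The paper instead fixes one $h$-optimal control and ties $N$ to $h$. Your uniform-in-$u$ bound on $|J_\varepsilon(t,x,u)-J_\varepsilon(t,y,u)|$, followed by passing to the supremum, is a mild improvement. It gives an honest modulus in $t$, whereas the paper's stated rate $C(\delta^{1/2}+\delta)$ overlooks that its constant $L$ from Step 1 depends on $h$. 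It also makes the joint continuity you mention immediate, which the paper does not address explicitly.

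The genuine difference is at $t=\tau$. You get existence of $W_\varepsilon(\tau-,x)$ from the Cauchy criterion, using that your $t$-modulus is uniform on $[0,\tau)$. This is shorter and more rigorous than the paper's Step 3, which manipulates $\lim_{t\to\tau-}W_\varepsilon(t,x)$ before showing it exists. The paper, however, proves more. It takes an $h$-optimal control in $\mathcal{A}^\varepsilon_\tau$, extends it back to $[t,\tau)$, and lets $t\uparrow\tau$. This identifies $W_\varepsilon(\tau-,x)=\sup_{u\in\mathcal{A}^\varepsilon_\tau}J_\varepsilon(\tau,x,u)$, and then $\mathcal{A}_\tau\subset\mathcal{U}_\tau$ gives $W_\varepsilon(\tau-,x)\le W_\varepsilon(\tau,x)$. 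Your remark that the admissible class jumps at $\tau$ points to this but does not prove the inequality. The inequality is not part of the stated lemma. It is, however, exactly what the paper uses next for $w(n\tau-,x)\le w(n\tau,x)$ and in the verification theorem. So if your version is to play the same role downstream, you would need to add that identification.
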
 
\begin{proof}
The proof is divided into three steps.

\emph{Step 1.} In the first part, we will prove that $W_\varepsilon(t,x)$ is continuous in $x$.\\
For different $x,x'\in\mathbb{R}^d$
\begin{align*}
W_\varepsilon(t,x)-W_\varepsilon(t,x')=&\sup_{u(\cdot)\in\mathcal{U}_t^{\varepsilon,0}}\lim_{n\to\infty}\int_t^{n\tau}E[f(s,X_s^{t,x,u},u(s,X_s^{t,x,u}))-\rho^\varepsilon]ds\\
&~~~-\sup_{u(\cdot)\in\mathcal{U}_t^{\varepsilon,0}}\lim_{n\to\infty}\int_t^{n\tau}E[f(s,X_s^{t,x',u},u(s,X_s^{t,x',u}))-\rho^\varepsilon]ds.
\end{align*}
For any $h>0$, there exists $u^h(\cdot)\in\mathcal{U}_t^{\varepsilon,0}$ such that
\begin{align*}
W_\varepsilon(t,x)<\lim_{n\to\infty}\int_t^{n\tau} E\left[f(s,X_s^{t,x,u^h},u^h(s,X_s^{t,x,u^h}))-\rho^\varepsilon\right]ds+h.
\end{align*}
Then,
\begin{align*}
&W_\varepsilon(t,x)-W_\varepsilon(t,x')\\
&<\lim_{n\to\infty}\int_t^{n\tau} E\left[f(s,X_s^{t,x,u^h},u^h(s,X_s^{t,x,u^h}))-f(s,X_s^{t,x',u^h},u^h(s,X_s^{t,x',u^h}))\right]ds+h. 
\end{align*}
Since for any fixed $u(\cdot)\in\mathcal{U}_t$ and any $(n+1)\tau\leq s<(n+2)\tau$
\begin{align*}
\left|E\left[f(s,X_s^{t,x,u},u(s,X_s^{t,x,u}))-f(s,X_s^{t,x',u},u(s,X_s^{t,x',u}))\right]\right|\leq\bar\alpha^nC(1+|x|+|x'|).
\end{align*}
Let $N=[\log_{\bar\alpha}(\frac{h(1-\bar\alpha)}{C\tau(1+|x|+|x'|)})]+1$ and $T=(N+1)\tau$. Then, we have
\begin{align}
\int_T^\infty\left|E\left[f(s,X_s^{t,x,u},u(s,X_s^{t,x,u}))-f(s,X_s^{t,x',u},u(s,X_s^{t,x',u}))\right]\right|ds<h.
\end{align} 
Thus, by (\ref{estimates}) we have 
\begin{align*}
W_\varepsilon(t,x)-W_\varepsilon(t,x')
\leq&h+\int_t^TE|f(s,X_s^{t,x,u^h},u^h(s,X_s^{t,x,u^h}))-f(s,X_s^{t,x',u^h},u^h(s,X_s^{t,x',u^h})|ds\\
+&\int_T^\infty\left|E\left[f(s,X_s^{t,x,u^h},u^h(s,X_s^{t,x,u^h}))-f(s,X_s^{t,x',u^h},u(s,X_s^{t,x',u^h}))\right]\right|ds\\
<&\frac{2C_f}{C_\sigma^2+2C_b}e^{(N+1)\tau(C_\sigma^2+2C_b)/2}|x-x'|+2h\\
 \leq&L|x-x'|+2h, 
\end{align*}
where $L=\frac{2C_f}{C_\sigma^2+2C_b}e^{\tau(C_\sigma^2+2C_b)}(\frac{C\tau(1+|x|+|x'|)}{h(1-\bar\alpha)})^{\frac{\tau(C_\sigma^2+2C_b)/2}{\ln 1/\bar\alpha}}$. If $|x-x'|$ is small enough, we can also see that the constant $L$ is independent of $x'$ since $(1+|x|+|x'|)\leq2(1+|x|)$. Thus, we can obtain $W_\varepsilon(t,x)-W_\varepsilon(t,x')<3h$ when $|x-x'|$ is small enough.

On the other hand, there exists $u^h(\cdot)\in\mathcal{U}_t$ such that
\begin{align*}
&W_\varepsilon(t,x)-W_\varepsilon(t,x')\\
>&\lim_{n\to\infty}\int_t^{n\tau} E\left[f(s,X_s^{t,x,u^h},u^h(s,X_s^{t,x,u^h}))-f(s,X_s^{t,x',u^h},u^h(s,X_s^{t,x',u^h}))\right]ds-h.
\end{align*}
Similarly, we have $W_\varepsilon(t,x)-W_\varepsilon(t,x')>-3h$ when $|x-x'|$ is small enough. Thus, we find that $W_\varepsilon(t,x)$ is continuous in $x$.
We should point out that the continuity in $x$ is uniform in $\varepsilon$, though not uniform in $x$.

\emph{Step 2.} Now we are ready to prove that the function $W_\varepsilon(t,x)$ is continuous in $t$.\\
Given $0\leq t\leq t+\delta<\tau$ and $x\in\mathbb{R}^d$, there exists $\tilde u^t(\cdot)\in\mathcal{U}_t^{\varepsilon,0}$ such that
\begin{align*}
    W_\varepsilon(t,x)<J_\varepsilon(t,x,\tilde u^t(\cdot))+h.
\end{align*}
Let $\tilde u^{t+\delta}(\cdot,x)$ be the restriction of $\tilde u^t(\cdot,x)$, the corresponding measurable function of $\tilde u^t(\cdot)$, on $[t+\delta,\infty)$, then the corresponding control process $\tilde u^{t+\delta}(\cdot)\in\mathcal{U}_{t+\delta}^{\varepsilon,0}$. So from \emph{Step 1} we have
\begin{align*}
&W_\varepsilon(t,x)-W_\varepsilon(t+\delta,x)<J_\varepsilon(t,x,\tilde u^t(\cdot))-J_\varepsilon(t+\delta,x,\tilde u^{t+\delta}(\cdot))+h\\
=&\lim_{n\to\infty}\int_t^{n\tau}\left(E[f(s,X_s^{t,x,\tilde u},\tilde u(s,X_s^{t,x,\tilde u}))]-\rho^\varepsilon\right)ds+h\\
&-\lim_{n\to\infty}\int_{t+\delta}^{n\tau}\left(E[f(s,X_s^{t+\delta,x,\tilde u},\tilde u(s,X_s^{t+\delta,x,\tilde u}))]-\rho^\varepsilon\right)ds\\
=&\int_t^{t+\delta}\left(E[f(s,X_s^{t,x,\tilde u},\tilde u(s,X_s^{t,x,\tilde u}))]-\rho^\varepsilon\right)ds+h\\
&+\lim_{n\to\infty}\int_{t+\delta}^{n\tau}\left(E[f(s,X_s^{t,x,\tilde u},\tilde u(s,X_s^{t,x,\tilde u}))]-E[f(s,X_s^{t+\delta,x,\tilde u},\tilde u(s,X_s^{t+\delta,x,\tilde u}))]\right)ds\\
<&\int_t^{t+\delta}\left|E[f(s,X_s^{t,x,\tilde u},\tilde u(s,X_s^{t,x,\tilde u}))]-\rho^\varepsilon\right|ds+LE|X_{t+\delta}^{t,x,\tilde u}-x|+3h,
\end{align*}
where $L\leq C+E(1+|x|+|X_{t+\delta}^{t,x,\tilde u}|)^p\leq C(1+|x|^p)$, for some $C,p>0$.
As a result, we have 
\begin{align*}
    W_\varepsilon(t,x)-W_\varepsilon(t+\delta,x)<C\delta^{1/2}+C_x\delta+3h.
\end{align*}
By symmetry and the arbitrariness of $h$, we have 
\begin{align*}
    |W_\varepsilon(t,x)-W_\varepsilon(t+\delta,x)|\leq C(\delta^{1/2}+\delta),
\end{align*}
where $C>0$ is independent of $\varepsilon$, but depends on $x$. So, we obtain the equicontinuity of the family of value functions $\{W_\varepsilon(t,x)\}_{\varepsilon>0}$. 

\emph{Step 3.}
Furthermore, from the definition of the auxiliary value function, we can claim that
\begin{align}\label{4.2a}
    W_\varepsilon(\tau-,x)=\sup_{u(\cdot)\in\mathcal{U}^\varepsilon_\tau}J_\varepsilon(\tau,x,u(\cdot)).
\end{align}
In fact, for any $h>0$ there exists $\tilde u(\cdot)\in\mathcal{A}^\varepsilon_\tau$ such that 
\begin{align*}
    J_\varepsilon(\tau,x,\tilde u(\cdot))>\sup_{u(\cdot)\in\mathcal{A}^\varepsilon_\tau}J_\varepsilon(\tau,x,u(\cdot))-h,
\end{align*}
and $\tilde u(\cdot)$ can be extended as a process in $\mathcal{U}_t^\varepsilon$, still denoted by $\tilde u(\cdot)$, then from \emph{Step 1} we have 
\begin{align*}
&\sup_{u(\cdot)\in\mathcal{A}^\varepsilon_\tau}J_\varepsilon(\tau,x,u(\cdot))-\lim_{t\to\tau-}W_\varepsilon(t,x)\\
   <&J_\varepsilon(\tau,x,\tilde u(\cdot))+h-\lim_{t\to\tau-}J_\varepsilon(t,x,\tilde u(\cdot))\\
   =&\lim_{n\to\infty}\int_\tau^{n\tau}\left(E[f(s,X_s^{\tau,x,\tilde u},\tilde u(s,X_s^{\tau,x,\tilde u}))]-\rho^\varepsilon\right)ds+h\\ 
    &-\lim_{t\to\tau-}\lim_{n\to\infty}\int_t^{n\tau}\left(E[f(s,X_s^{t,x,\tilde u},\tilde u(s,X_s^{t,x,\tilde u}))]-\rho^\varepsilon\right)ds\\
    =&\lim_{t\to\tau-}\lim_{n\to\infty}\int_\tau^{n\tau}\left(E[f(s,X_s^{\tau,x,\tilde u},\tilde u(s,X_s^{\tau,x,\tilde u}))]-E[f(s,X_s^{\tau,X_\tau^t,\tilde u},\tilde u(s,X_s^{\tau,X_\tau^t,\tilde u}))]\right)ds\\ 
    &-\lim_{t\to\tau-}\int_t^{\tau}\left(E[f(s,X_s^{t,x,\tilde u},\tilde u(s,X_s^{t,x,\tilde u}))]-\rho^\varepsilon\right)ds+h\\
    \leq&\lim_{t\to\tau-}LE|X_\tau^{t,x,\tilde u}-x|-\lim_{t\to\tau-}\int_t^\tau\left(E[f(s,X_s^{t,x,\tilde u},\tilde u(s,X_s^{t,x,\tilde u}))]-\rho^\varepsilon\right)ds+3h\\
    =&3h.
\end{align*}
Since $h>0$ is arbitrary, we have
\begin{align*}
    \sup_{u(\cdot)\in\mathcal{A}^\varepsilon_\tau}J_\varepsilon(\tau,x,u(\cdot))-\lim_{t\to\tau-}W_\varepsilon(t,x)\leq0.
\end{align*}
The inverse inequality can be similarly obtained. Since $\mathcal{A}_t\subset\mathcal{U}_t$ for any $t\geq 0$, then
\begin{align*}
W_\varepsilon(\tau,x)=\sup_{u(\cdot)\in\mathcal{U}_\tau}J_\varepsilon(\tau,x,\alpha(\cdot))\geq\sup_{u(\cdot)\in\mathcal{A}_\tau}J_\varepsilon(\tau,x,\alpha(\cdot))= W_\varepsilon(\tau-,x).
\end{align*}
This completes the proof.
\end{proof}

By the Arzal\`a-Ascoli theorem and some standard arguments, we have that there exists a subsequence $\{\varepsilon_k\}$, still denote $\{\varepsilon\}$, such that $\lim_{\varepsilon\to0}W_\varepsilon(t,x)$ exists and denotes by
\begin{align*}
    w(t,x)=\lim_{\varepsilon\to0}W_\varepsilon(t,x).
\end{align*}

\begin{lemma}
    The function $w(t,x)$ has the following properties:
    
    1) linear growth:
    \begin{align*}
        |w(t,x)|\leq C(1+|x|),
    \end{align*}

    2) continuity in each interval $[n\tau,(n+1)\tau)\times\mathbb{R}^d$ and left-limit $\lim_{t\to n\tau-}w(t,x)$ exists for any $n\geq 1$ and monotonicity:
    \begin{align*}
        w(n\tau-,x)\leq w(n\tau,x).
    \end{align*}
\end{lemma}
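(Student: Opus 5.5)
The plan is to pass to the limit $\varepsilon\to0$ in the estimates already established for $W_\varepsilon$, using that each of them holds with constants independent of $\varepsilon$. The limit $w$ is taken along the subsequence $\{\varepsilon_k\}$ produced by the Arzel\`a--Ascoli argument.

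\emph{Linear growth.} By (\ref{linear}) we have $|W_\varepsilon(t,x)|\le C(1+|x|)$. The constant $C$ there comes from (\ref{dominate}) and from the linear growth of $f$, so it does not depend on the control process or on $\varepsilon$. Since $w(t,x)=\lim_k W_{\varepsilon_k}(t,x)$ pointwise, the same bound holds for $w$.

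\emph{Continuity on each cell.} By periodicity it suffices to treat $[0,\tau)\times\mathbb{R}^d$. Step 1 of the previous lemma gives $|W_\varepsilon(t,x)-W_\varepsilon(t,x')|\le L|x-x'|+3h$ for every $h>0$, where $L$ depends on $h$ and on $x$ (through $1+|x|$ once $|x-x'|\le1$) but not on $\varepsilon$. Step 2 gives $|W_\varepsilon(t,x)-W_\varepsilon(t+\delta,x)|\le C_x(\delta^{1/2}+\delta)$, again with $C_x$ independent of $\varepsilon$ and locally bounded in $x$. Combining the two yields a modulus of continuity for $\{W_\varepsilon\}$ on $[0,\tau)\times B_R$ that is uniform in $\varepsilon$. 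Both inequalities survive the pointwise limit, so $w$ inherits the same modulus and is continuous on $[0,\tau)\times\mathbb{R}^d$. The time estimate is uniform up to $t+\delta<\tau$, so $w(\cdot,x)$ is uniformly continuous on $[0,\tau)$. Hence $t\mapsto w(t,x)$ is Cauchy as $t\to\tau-$, and the left limit $w(\tau-,x)$ exists. By periodicity this gives $w(n\tau-,x)$ for every $n\ge1$.

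\emph{Monotonicity.} Step 3 of the previous lemma gives $W_\varepsilon(\tau-,x)\le W_\varepsilon(\tau,x)$ for each $\varepsilon$. The obstacle is exchanging the limits $t\to\tau-$ and $\varepsilon\to0$. I would handle this with the $\varepsilon$-uniform time modulus. For $t<\tau$,
\[
|w(t,x)-W_\varepsilon(\tau-,x)|\le |w(t,x)-W_\varepsilon(t,x)|+\sup_{t<s<\tau}|W_\varepsilon(t,x)-W_\varepsilon(s,x)| .
\]
The second term is at most $C_x((\tau-t)^{1/2}+(\tau-t))$ uniformly in $\varepsilon$, after letting $s\to\tau-$. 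The argument then proceeds as follows:
\begin{enumerate}[(i)]
\item Fix $t$ close to $\tau$ and let $\varepsilon=\varepsilon_k\to0$. This gives $\limsup_k|w(t,x)-W_{\varepsilon_k}(\tau-,x)|\le C_x((\tau-t)^{1/2}+(\tau-t))$.
\item Let $t\to\tau-$. This gives $W_{\varepsilon_k}(\tau-,x)\to w(\tau-,x)$.
\item By periodicity $W_\varepsilon(\tau,x)=W_\varepsilon(0,x)\to w(0,x)=w(\tau,x)$.
\item Passing to the limit in $W_{\varepsilon_k}(\tau-,x)\le W_{\varepsilon_k}(\tau,x)$ yields $w(\tau-,x)\le w(\tau,x)$, and the same holds at every $n\tau$ by periodicity.
\end{enumerate}
The delicate point is checking that the constants in Steps 1--2 really do not depend on $\varepsilon$, which I would verify from the fact that they come only from (\ref{estimates}), Theorem \ref{yasuo} and (\ref{dominate}).
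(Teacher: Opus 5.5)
Your proposal is correct and follows the route the paper implicitly takes. The paper gives no written proof of this lemma: it treats it as a direct consequence of three facts about $W_\varepsilon$ from the preceding lemma, namely the $\varepsilon$-uniform linear growth (\ref{linear}), the $\varepsilon$-uniform equicontinuity from Steps 1--2, and the inequality $W_\varepsilon(\tau-,x)\le W_\varepsilon(\tau,x)$ from Step 3, together with the Arzel\`a--Ascoli extraction. Your use of the $\varepsilon$-uniform time modulus to show $W_{\varepsilon_k}(\tau-,x)\to w(\tau-,x)$ fills in the one interchange of limits ($t\to\tau-$ versus $\varepsilon\to0$) that the paper leaves to ``standard arguments''.
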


Then we will give the dynamic programming principle satisfied by the auxiliary function $w(t,x)$.
\begin{theorem}\label{dynamic programming}
The function $w(t,x)$ satisfies the dynamic programming equation: for any bounded stopping time $t<\Theta<\tau$ that 
\begin{align}\label{DPP}
w(t,x)=\sup_{u(\cdot)\in\mathcal{U}_t}E\left[\int_t^\Theta\left[f(s,X_s^{t,x,u},u(s,X_s^{t,x,u}))-\rho\right]ds+w(\Theta,X_\Theta^{t,x,u})\right].
\end{align}
\end{theorem}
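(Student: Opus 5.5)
The plan is to prove the dynamic programming equation first at the level of the approximating functions $W_\varepsilon$, with $\rho^\varepsilon$ in place of $\rho$, and then pass to the limit along the subsequence $\varepsilon\to0$ for which $W_\varepsilon\to w$. Concretely, for fixed $\varepsilon$ we aim to show
\begin{align*}
W_\varepsilon(t,x)=\sup_{u(\cdot)\in\mathcal{U}_t^{\varepsilon,0}}E\left[\int_t^\Theta\left[f(s,X_s^{t,x,u},u(s,X_s^{t,x,u}))-\rho^\varepsilon\right]ds+W_\varepsilon(\Theta,X_\Theta^{t,x,u})\right].
\end{align*}
Since $\rho^\varepsilon\to\rho$, $W_\varepsilon\to w$ pointwise, and $|W_\varepsilon|\le C(1+|x|)$ uniformly in $\varepsilon$ by (\ref{linear}), dominated convergence together with the moment bound (\ref{estimates}) lets us pass to the limit inside the expectation for each fixed control. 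The supremum over controls has to be handled uniformly in $u$, and I return to this below.

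For the inequality ``$\le$'' at level $\varepsilon$, fix $u\in\mathcal{U}_t^{\varepsilon,0}$ and use the flow property $X_s^{t,x,u}=X_s^{\Theta,X_\Theta^{t,x,u},u}$ for $s\ge\Theta$ together with the strong Markov property. This splits
\begin{align*}
J_\varepsilon(t,x,u)=E\int_t^\Theta[f-\rho^\varepsilon]ds+E\big[J_\varepsilon(\Theta,X_\Theta,u|_{[\Theta,\infty)})\big].
\end{align*}
The restriction of $u$ to $[\Theta,\infty)$ is still admissible, and its $\rho^u$ equals $\rho^\varepsilon$ because $\rho^u$ is independent of initial data (Theorem \ref{exist}). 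Hence the second term is at most $E[W_\varepsilon(\Theta,X_\Theta)]$. The interchange of the limit $n\to\infty$ with the expectation is justified by the bound (\ref{dominate}), which is linear in the initial state and therefore integrable.

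For ``$\ge$'' we need the standard $h$-optimal pasting argument. Using the continuity of $W_\varepsilon$ in $x$ (Step 1 of the preceding lemma, which is uniform in $\varepsilon$), cover $\mathbb{R}^d$ by a countable Borel partition $\{B_i\}$ of small balls and pick points $x_i\in B_i$. For each $i$ choose an $h$-optimal control $u^i\in\mathcal{U}^{\varepsilon,0}_{\cdot}$ for $W_\varepsilon(\cdot,x_i)$. If $\Theta$ is not deterministic, first approximate it by stopping times taking finitely many values and use the time-continuity from Step 2. Then concatenate: use $u$ on $[t,\Theta)$ and $\sum_i u^i\mathbf 1_{B_i}(X_\Theta)$ afterwards.

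I expect this pasting to be the main obstacle, because the class $\mathcal{A}$ demands feedback controls that are Lipschitz in $x$ and c\`adl\`ag in $s$. A pasted control depending on the random state $X_\Theta$ through $\mathbf 1_{B_i}$ is not of this form. I would first try to restrict $\Theta$ to deterministic times $t<\theta<\tau$, and replace the indicators by a Lipschitz partition of unity in $x$ on the slice $s=\theta$, keeping the c\`adl\`ag-in-time requirement. The resulting error is controlled by the continuity moduli of Step 1, and the ergodic constant of the pasted control equals $\rho^\varepsilon$ since it is determined by the periodic tail. I would then recover general bounded stopping times by approximation, using the equicontinuity in $t$ and $x$.

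Finally, when passing $\varepsilon\to0$, the error terms in both inequalities depend on $\varepsilon$ only through the constants $C$ of (\ref{dominate}) and (\ref{linear}) and the moduli from the lemma, all of which are uniform in $\varepsilon$ and in $u$. This uniformity yields the limit equation (\ref{DPP}) with the supremum over $\mathcal{U}_t$.
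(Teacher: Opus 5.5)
Your architecture is the paper's: a DPP for $W_\varepsilon$ with $\rho^\varepsilon$, the bound $W_\varepsilon(t,x)\le\sup_u E[\cdots]$ by splitting $J_\varepsilon$ at $\Theta$ with the Markov property, the reverse bound by pasting $h$-optimal controls over a spatial partition, reduction to finitely-valued stopping times, and then $\varepsilon\to0$.

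\textbf{The pasting step is a genuine gap.} The paper simply declares the concatenated control (\ref{tilde u}) admissible. You correctly notice that it is not of the form $u(s,X_s)$ with $u\in\mathcal{A}$, but your repair fails:
\begin{enumerate}
\item[(i)] For $s>\theta$ a feedback law sees only $X_s$. So no function $\tilde u(s,x)$ can implement ``follow $u^i$ after $\theta$ on $\{X_\theta\in B_i\}$''. The obstruction is the dependence on $X_\theta$, not the randomness of $\Theta$, so restricting to deterministic $\theta$ does not remove it.
\item[(ii)] A Lipschitz partition of unity $\sum_i\psi_i(x)u^i(s,x)$ need not take values in $U$, which is only assumed compact, not convex.
\item[(iii)] Even when it is $U$-valued, it is a new feedback law generating a different state process. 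Its payoff bears no relation to $J_\varepsilon(\theta,x_i,u^i)$. The moduli of Step 1 do not help, because they control $y\mapsto J_\varepsilon(\theta,y,u)$ for a \emph{fixed} control, not the effect of changing the control.
\item[(iv)] For the same reason, your claim that the pasted control has ergodic constant $\rho^\varepsilon$ fails. Its periodic tail is the mixture, whose invariant measure and ergodic constant generally differ from those of the $u^i$. So it need not lie in $\mathcal{U}_t^{\varepsilon,0}$, and its $J_\varepsilon$ may be $-\infty$.
\end{enumerate}
As it stands, the inequality $W_\varepsilon(t,x)\ge\sup_u E[\int_t^\Theta(f-\rho^\varepsilon)ds+W_\varepsilon(\Theta,X_\Theta)]$ is not proved. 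Closing it requires one of two things. One is a control class that is stable under concatenation with a state-dependent selection at (stopping) times; this is effectively what the paper assumes when it calls (\ref{tilde u}) admissible. The other is a single feedback control that is $h$-optimal for $W_\varepsilon(\theta,\cdot)$ uniformly on large balls, which nothing established before the theorem provides.

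\textbf{A smaller point concerns the limit $\varepsilon\to0$.} The constants in (\ref{dominate}), (\ref{linear}) and the continuity moduli do not by themselves give $\sup_u\bigl|E[W_\varepsilon(\Theta,X_\Theta^{t,x,u})-w(\Theta,X_\Theta^{t,x,u})]\bigr|\to0$. That follows from three ingredients together:
\begin{enumerate}
\item[(a)] locally uniform convergence $W_\varepsilon\to w$, from Arzel\`a--Ascoli applied with the equicontinuity of the preceding lemma;
\item[(b)] a $u$-uniform moment bound on $\sup_{s\in[t,\tau]}|X_s^{t,x,u}|$;
\item[(c)] the uniform linear growth.
\end{enumerate}
The paper needs this uniformity only in the direction where the $h$-optimal control depends on $\varepsilon$. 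For $w(t,x)\ge\sup_u E[\cdots]$ it fixes an $h$-optimal control for the limiting $w$-problem before sending $\varepsilon\to0$. Dominated convergence for that single control then suffices.
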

\begin{proof}
The proof is divided into two steps.

\emph{Step 1.} In this part, we will prove that the function $w(t,x)$ satisfies the dynamic programming equation (\ref{DPP}) for any bounded stopping time $t\leq\Theta<\tau$ taking only finitely many values $\{t_1,t_2,\cdots,t_l\}$.

For any $h>0$, there exists $u^h(\cdot)\in\mathcal{U}_t$ such that 
\begin{align}\label{ll}
\begin{split}
&\sup_{u(\cdot)\in\mathcal{U}_t}E\left[w(\Theta,X_\Theta^{t,x,u})+\int_t^\Theta(f(s,X_s^{t,x,u},u(s,X_s^{t,x,u}))-\rho)ds\right]\\
 <&E\left[w(\Theta,X_\Theta^{t,x,u^h})+\int_t^\Theta(f(s,X_s^{t,x,u^h},u^h(s,X_s^{t,x,u^h}))-\rho)ds\right]+h. 
  \end{split}
\end{align} 
By the Markov inequality, we see that for any $s\geq t$ and constant $G>0$
\begin{align*}
P\{|X_s|^2\geq G\}\leq\frac{E|X_s|^2}{G}\leq\frac{L/\lambda+|x|^2}{G}.
\end{align*}
Let $\{I_{s,k}\}_{0\leq k\leq N}$ be a finite partition of $B_G=\{x\in\mathbb{R}^d;|x|^2\leq G\}$
such that for any given $x_k\in I_{s,k}$ and $y\in I_{s,k}$, we have for any $\varepsilon>0$,
\begin{align*}
W_\varepsilon(s,y)<W_\varepsilon(s,x_k)+h,
\end{align*}
and 
\begin{align*}
  &\lim_{n\to\infty}\int_s^{n\tau}E[f(r,X_r^{s,x_k,u^{s,k,h}},u^{s,k,h}(r,X_r^{s,x_k,u^{s,k,h}}))-\rho^\varepsilon]dr\\
<&\lim_{n\to\infty}\int_s^{n\tau}E[f(r,X_r^{s,y,u^{s,k,h}},u^{s,k,h}(r,X_r^{s,y,u^{s,k,h}}))-\rho^\varepsilon]dr+h,
\end{align*}
where $u^{s,k,h}(\cdot)\in\mathcal{U}_s^\varepsilon$ is such that 
\begin{align*}
W_\varepsilon(s,x_k)<\lim_{n\to\infty}\int_s^{n\tau}E[f(r,X_r^{s,x_k,u^{s,k,h}},u^{s,k,h}(r,X_r^{s,x_k,u^{s,k,h}}))-\rho^\varepsilon]dr+h.
\end{align*}
Then, for any $y\in I_{s,k}$,
\begin{align}\label{4.4a}
W_\varepsilon(s,y)<\lim_{n\to\infty}\int_s^{n\tau}(E[f(r,X_r^{s,y,u^{s,k,h}},u^{s,k,h}(r,X_r^{s,y,u^{s,k,h}}))]-\rho^\varepsilon)dr+3h.
\end{align}
However, for any $y\notin B_G$, by the linear growth property of $W_\varepsilon(s,x)$ we have
\begin{align*}
  |W_\varepsilon(s,y)-W_\varepsilon(s,x_0)|\leq C(1+|x_0|+|y|),
\end{align*}
and 
\begin{align*}
&\bigg|\lim_{n\to\infty}\int_s^{n\tau}E[f(r,X_r^{s,x_0,u^{s,0,h}},u^{s,0,h}(r,X_r^{s,x_0,u^{s,0,h}}))-\rho^\varepsilon]dr\\
&-\lim_{n\to\infty}\int_s^{n\tau}E[f(r,X_r^{s,y,u^{s,0,h}},u^{s,0,h}(r,X_r^{s,y,u^{s,0,h}}))-\rho^\varepsilon]dr\bigg|\leq C(1+|y|+|x_0|).
\end{align*}
So, we have for any $y\notin B_G$
\begin{align*}
W_\varepsilon(s,y)\leq \lim_{n\to\infty}\int_s^{n\tau}E[f(r,X_r^{s,y,u^{s,0,h}},u^{s,0,h}(r,X_r^{s,y,u^{s,0,h}}))-\rho^\varepsilon]dr\\
+2C(1+|y|+|x_0|)+h.
\end{align*}
Let
\begin{align*}
u^s(\cdot)=\sum_{k=1}^NI_{\{X_s\in I_{s,k}\}}u^{s,k,h}(\cdot)+u^{s,0,h}(\cdot)I_{\{X_s\in I_{s,0}\cup B_G^c\}},
\end{align*}
and
\begin{align}\label{tilde u}
\tilde{u}_r(\cdot)=
\begin{cases}
u^h_r,~r\in[t,\Theta],\\
\sum_{m=1}^lI_{\{\Theta=t_m\}}u^{t_m}_r,~r>\Theta.
\end{cases}
\end{align}
Then, it is easy to check that $\tilde u(\cdot)\in\mathcal{U}_t^\varepsilon$ is an admissible control. 
By (\ref{ll}) and (\ref{4.4a}), we have, for some large enough $G$
\begin{align*}
  &E\left[\int_t^\Theta(f(r,X_r^{t,x,u^h},u^h(r,X_r^{t,x,u^h}))-\rho^\varepsilon)dr+W_\varepsilon(\Theta,X_\Theta^{t,x,u^h})\right]+h\\
 <&\sum_{m=1}^lP(\Theta=t_m)E\left[\int_t^\Theta(f(r,X_r^{t,x,u^h},u^h(r,X_r^{t,x,u^h}))-\rho^\varepsilon)dr\right.\\
  &~~~~~~~~~~~~~\left.+\lim_{n\to\infty}\int_\Theta^{n\tau} E[f(r,X_r^{\Theta,X_\Theta,u^\Theta},u^\Theta(r,X_r^{\Theta,X_\Theta,u^\Theta}))-\rho^\varepsilon|\mathcal{F}_\Theta]dr~\right.\\
  &~~~~~~~~~~~~~\left.+C(1+|X_\Theta|+|x_0|)I_{\{X_\Theta\notin B_G\}}\bigg|\Theta=t_m\right]+3h\\
 <&\sum_{m=1}^lP(\Theta=t_m)\left(\int_t^{t_m}E[f(r,X_r^{t,x,\tilde u},\tilde u(r,X_r^{t,x,\tilde u}))-\rho^\varepsilon]dr\right. \\
  &~~~~~~~~~~~~~~~~~~~~~~~~~~+\left.\lim_{n\to\infty}\int_{t_m}^{n\tau} E[f(r,X_r^{t,x,\tilde u},\tilde u(r,X_r^{t,x,\tilde u}))-\rho^\varepsilon]dr\right)+4h\\
 \leq&J_\varepsilon(t,x,\tilde u(\cdot))+4h
 \leq W_\varepsilon(t,x)+4h.
\end{align*}
It follows that for any sufficiently small $\varepsilon>0$, we have 
\begin{align*}
    &\sup_{u(\cdot)\in\mathcal{U}_t}E\left[w(\Theta,X_\Theta^{t,x,u})+\int_t^\Theta(f(s,X_s^{t,x,u},u(s,X_s^{t,x,u}))-\rho)ds\right]\\
    <&W_\varepsilon(t,x)+5h+E\left[\int_t^\Theta(\rho^\varepsilon-\rho)dr+w(\Theta,X_\Theta^{t,x,u^h})-W_\varepsilon(\Theta,X_\Theta^{t,x,u^h})\right].
\end{align*}
Since $h>0$ is arbitrary and let $\varepsilon\to0$, we have 
\begin{align}\label{r}
  \sup_{u(\cdot)\in\mathcal{U}_t}E\left[w(\Theta,X_\Theta^{t,x,u})+\int_t^\Theta(f(r,X_r^{t,x,u},u(r,X_r^{t,x,u}))-\rho)dr\right] \leq w(t,x).
\end{align}
 
On the other hand, there exists a control $u^h(\cdot)\in\mathcal{U}_t^\varepsilon$ such that
\begin{align*}
\begin{split}
W_\varepsilon(t,x)<&\lim_{n\to\infty}\int_t^{n\tau} E\left[f(r,X_r^{t,x,u^h},u^h(r,X_r^{t,x,u^h}))-\rho^\varepsilon\right]dr+h\\
=&h+\lim_{n\to\infty}\sum_{m=1}^lP(\Theta=t_m)E\left[\int_t^{t_m} (f(r,X_r^{t,x,u^h},u^h(r,X_r^{t,x,u^h}))-\rho^\varepsilon)dr\right.\\
&~~~~~~~~~~~~~~~~~~~~~~~~\left.+\int_{t_m}^{n\tau}E\left[f(r,X_r^{t,x,u^h},u^h(r,X_r^{t,x,u^h}))-\rho^\varepsilon|\mathcal{F}_{t_m}\right]dr\right]\\
=&h+\lim_{n\to\infty}\sum_{m=1}^lP(\Theta=t_m)E\left[\int_t^\Theta(f(r,X_r^{t,x,u^h},u^h(r,X_r^{t,x,u^h}))-\rho^\varepsilon)dr\right.\\
&~~~~~~~~~~~~~~~~\left.+\int_\Theta^{n\tau} E\left[f(r,X_r^{t,x,u^h},u^h(r,X_r^{t,x,u^h}))-\rho^\varepsilon|\mathcal{F}_\Theta\right]dr|\Theta=t_m\right]\\ 
\leq&h+E\left[\int_t^\Theta(f(r,X_r^{t,x,u^h},u^h(r,X_r^{t,x,u^h}))-\rho^\varepsilon)dr+W_\varepsilon(\Theta,X_\Theta^{t,x,u^h})\right].
\end{split}
\end{align*}
In the limit of $\varepsilon\to0$, we have
\begin{align*}
    w(t,x)<&h+E\left[\int_t^\Theta(f(r,X_r^{t,x,u^h},u^h(r,X_r^{t,x,u^h}))-\rho)dr+w(\Theta,X_\Theta^{t,x,u^h})\right]\\
    \leq&h+\sup_{u(\cdot)\in\mathcal{U}_t}E\left[\int_t^\Theta(f(r,X_r^{t,x,u},u(r,X_r^{t,x,u}))-\rho)dr+w(\Theta,X_\Theta^{t,x,u})\right].
\end{align*}
Since $\varepsilon\to0$, we have 
\begin{align}\label{l}
    w(t,x)\leq\sup_{u(\cdot)\in\mathcal{U}_t}E\left[\int_t^\Theta(f(r,X_r^{t,x,u},u(r,X_r^{t,x,u}))-\rho)dr+w(\Theta,X_\Theta^{t,x,u})\right].
\end{align}
Combing (\ref{r}) and (\ref{l}), we have that the dynamic programming equation (\ref{DPP}) is satisfied when $\Theta$ takes finitely many values.

\emph{Step 2}. For a general stopping time $t\leq\Theta<\tau$, let 
\begin{align*}
\Theta_n=\sum_{k=2^nt}^{2^n\tau-1}\frac{k}{2^n}I_{\{\frac{k}{2^n}\leq\Theta< \frac{k+1}{2^n}\}}.
\end{align*}
Then $\Theta_n$ is also a bounded $\mathcal{F}^s_t$-stopping time that take only finitely many values and $\Theta_n\leq\Theta<\Theta_n+\frac{1}{2^n}$.

Note that
\begin{align}\label{ww}
  \begin{split}
&\left|\sup_{u(\cdot)\in\mathcal{U}_t}E\left[w(\Theta,X_\Theta^{t,x,u})+\int_t^\Theta(f(r,X_r^{t,x,u},u(r,X_r^{t,x,u}))-\rho)dr\right]\right.\\
&\left.-\sup_{u(\cdot)\in\mathcal{U}_t}E\left[w(\Theta_n,X_{\Theta_n}^{t,x,u})+\int_t^{\Theta_n}(f(r,X_r^{t,x,u},u(r,X_r^{t,x,u}))-\rho)dr\right]\right|\\
\leq&\sup_{u(\cdot)\in\mathcal{U}_t}E\left|w(\Theta,X_\Theta^{t,x,u})-w(\Theta_n,X_{\Theta_n}^{t,x,u})\right|\\
&+\sup_{u(\cdot)\in\mathcal{U}_t}E\left[\int_{\Theta_n}^\Theta\left|f(r,X_r^{t,x,u},u(r,X_r^{t,x,u}))-\rho\right|dr\right],
  \end{split}
\end{align}
the first part of the right-hand side of the inequality converges to zero as $n\to \infty$ because of the dominated convergence theorem (dominated by $C(1+\sup_{s\in[t,\tau]}|X_s^{t,x,u}|)$), and the second part converges to zero as $n\to\infty$ because of the monotone convergence theorem. Finally, the result of this theorem follows from the above convergence and the result in Step 2.
\end{proof}

\section{HJB equation}\label{HJB}
In this section, we mainly consider the periodic solution of the following periodic HJB equation:
\begin{equation}\label{PDE}
  \begin{cases}
    \partial_t\varphi(t,x)+\sup\limits_{u\in U}H(t,x,u,D\varphi,D^2\varphi)=\rho,~~(t,x)\in[0,\tau)\times\mathbb{R}^d,\\
    \varphi(0,\cdot)=\varphi(\tau,\cdot),
  \end{cases}
\end{equation}
where the Hamilton function $H$ is defined as 
\begin{align*}
H(t,x,u,D\varphi,D^2\varphi)
=f(t,x,u)+D\varphi(t,x)\cdot b(t,x,u)+\frac{1}{2}tr(\sigma\sigma'(t,x,u)D^2\varphi(t,x)).
\end{align*}
It is worth pointing out that the ergodic HJB equation (\ref{PDE}) is very different from the HJB equation of finite time stochastic control problems which has a terminal condition, though they have same PDE.
This is actually an infinite horizon problem, which can be seen that the terminal condition is forgotten during the process of the ergodicity taking terminal time to infinity. We will prove that the solution of the equation is the value function $w$ of the ergodic control problem. 
As far as we know, this is the first attempt to consider infinite horizon non-autonomous HJB equations.

Since we can only prove that the auxiliary function $w(t,x)$ is continuous, we cannot prove that the pair $(w,\rho)$ is a classical solution of the HJB equation (\ref{PDE}). Thus, a viscosity solution as a kind of weak solution is considered. Our objective is to prove that the pair $(w,\rho)$ is a viscosity solution of (\ref{PDE}).

\begin{definition}
1) A pair $(w,\rho)$ with function $w(t,x)\in C([0,\tau)\times\mathbb{R}^d)$ and constant $\rho$ is called a viscosity subsolution of HJB equation (\ref{PDE}) if $w(0,x)=w(\tau,x)$ for any $x\in\mathbb{R}^d$, and if for any $\varphi\in C^{1,2}([0,\tau]\times\mathbb{R}^d)$, whenever $w(t,x)-\varphi(t,x)$ attains a local maximum at $(t_0,x_0)\in[0,\tau)\times\mathbb{R}^d$, we have
\begin{align*}
  \partial_t\varphi(t_0,x_0)+\sup\limits_{u\in U}H(t_0,x_0,u,D\varphi,D^2\varphi)\geq\rho.
\end{align*}
2) A pair $(w,\rho)$ is a viscosity supersolution of the HJB equation (\ref{PDE}) if $w(0,x)=w(\tau,x)$ for any $x\in\mathbb{R}^d$, and if for any $\varphi\in C^{1,2}([0,\tau]\times\mathbb{R}^d)$, whenever $w(t,x)-\varphi(t,x)$ attains a local minimum at $(t_0,x_0)\in[0,\tau)\times\mathbb{R}^d$, we have
\begin{align*}
  \partial_t\varphi(t_0,x_0)+\sup\limits_{u\in U}H(t_0,x_0,u,D\varphi,D^2\varphi)\leq\rho.
\end{align*}
3) A pair $(w,\rho)$ is a viscosity solution of (\ref{PDE}) if it is both a viscosity subsolution and a viscosity supersolution.
\end{definition}

In the following theorem, we will prove that the HJB equation (\ref{PDE}) has a viscosity solution.

\begin{theorem}\label{existence}
The pair $(w,\rho)$ is a viscosity solution of the HJB equation (\ref{PDE}).
\end{theorem}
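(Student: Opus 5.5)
The plan is the standard derivation of viscosity inequalities from the dynamic programming principle, Theorem \ref{dynamic programming}, applied on the interior of a period. Two preliminary facts are needed. \emph{Periodicity} $w(0,\cdot)=w(\tau,\cdot)$ follows because each $W_\varepsilon$ is $\tau$-periodic by construction ($J_\varepsilon$ is $\tau$-periodic in $t$ by Lemma \ref{lemma 2.5}), and this passes to the limit $\varepsilon\to0$. \emph{Continuity} of $w$ on $[0,\tau)\times\mathbb{R}^d$ comes from the preceding lemma on $w$. Fix $(t_0,x_0)\in[0,\tau)\times\mathbb{R}^d$ and a test function $\varphi\in C^{1,2}$. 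Adding a constant, assume $w(t_0,x_0)=\varphi(t_0,x_0)$. Since $t_0<\tau$, we can choose $\delta>0$ with $t_0+\delta<\tau$, so the DPP is available on $[t_0,t_0+\delta]$.

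\emph{Subsolution.} Suppose $w-\varphi$ has a local maximum at $(t_0,x_0)$, so $w\le\varphi$ on a neighbourhood $[t_0,t_0+\delta]\times\bar B_r(x_0)$. Argue by contradiction: assume
\[
\partial_t\varphi(t_0,x_0)+\sup_{u\in U}H(t_0,x_0,u,D\varphi,D^2\varphi)<\rho-2\kappa .
\]
By continuity of $b,\sigma,f$ (Assumption \ref{assume}) and compactness of $U$, the same inequality with $\rho-\kappa$ holds uniformly in $u\in U$ for $(s,y)$ in a smaller neighbourhood. Let $\Theta=(t_0+\delta)\wedge\inf\{s\ge t_0:|X_s-x_0|\ge r\}$. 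This is a bounded stopping time with $\Theta<\tau$, and for any admissible $u(\cdot)$ we have $w(\Theta,X_\Theta)\le\varphi(\Theta,X_\Theta)$. Itô's formula for $\varphi$ up to $\Theta$ gives
\[
E\Big[\int_{t_0}^\Theta (f-\rho)\,ds+w(\Theta,X_\Theta)\Big]\le\varphi(t_0,x_0)+E\int_{t_0}^\Theta(\partial_t\varphi+H-\rho)\,ds\le w(t_0,x_0)-\kappa E[\Theta-t_0].
\]
The uniform bound $E[\Theta-t_0]\ge c>0$ over $u$ follows from the linear growth of $b,\sigma$ and a standard moment estimate $P(\sup_{s\le t_0+\delta'}|X_s-x_0|\ge r)\le C\delta'$. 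Taking the supremum over $u$ and using \eqref{DPP} then yields $w(t_0,x_0)\le w(t_0,x_0)-\kappa c$, a contradiction.

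\emph{Supersolution.} Suppose $w-\varphi$ has a local minimum at $(t_0,x_0)$, so $w\ge\varphi$ nearby. Take any constant control $v\in U$, which belongs to $\mathcal{A}\subset\mathcal{U}_{t_0}$. Use \eqref{DPP} with $\Theta_h=(t_0+h)\wedge(\text{exit time from }B_r(x_0))$. The DPP gives $w(t_0,x_0)\ge E[\int_{t_0}^{\Theta_h}(f-\rho)ds+\varphi(\Theta_h,X_{\Theta_h})]$. Itô's formula then gives $0\ge E\int_{t_0}^{\Theta_h}(\partial_t\varphi+H(s,X_s,v,\cdot)-\rho)\,ds$. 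Divide by $h$ and let $h\downarrow0$, using continuity of the integrand and $E[\Theta_h-t_0]/h\to1$. This gives $\partial_t\varphi+H(t_0,x_0,v,D\varphi,D^2\varphi)\le\rho$, and taking the supremum over $v\in U$ finishes this half.

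\emph{Main obstacle.} I expect the hardest point to be the subsolution step. There the contradiction must hold uniformly over \emph{all} feedback controls, and it requires the DPP with a genuine exit stopping time, which is not finitely valued. This is exactly why Step 2 of Theorem \ref{dynamic programming} is needed.

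A second delicate point is the endpoint $t_0=0$ in the periodic setting. One must make sure the DPP holds for stopping times starting at $t=0$. One should also note that only one-sided time neighbourhoods $[t_0,t_0+\delta]$ are used, so the possible jump of $w$ at $\tau$ (where only $w(\tau-,x)\le w(\tau,x)$ is known) never enters. Since $\Theta<\tau$ always, the argument stays inside the continuity interval $[0,\tau)$.
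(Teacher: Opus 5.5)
Your proposal follows the paper's proof: both halves are read off from the dynamic programming principle of Theorem \ref{dynamic programming} via It\^o's formula up to the exit time from a small ball capped at $t_0+\delta$. Your contradiction form of the subsolution step, with a uniform-in-$u$ lower bound on $E[\Theta-t_0]$, is a cleaner treatment of the uniformity in the control that the paper's choice of an $h\delta$-optimal $u^h$ passes over.

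One caveat, shared with the paper: in the subsolution step you apply (\ref{DPP}) with the control-dependent exit time $\Theta^u$, whereas Theorem \ref{dynamic programming} is stated for a single stopping time. You can sidestep this as follows: take $\Theta=t_0+\delta$ deterministic; modify $\varphi$ away from $(t_0,x_0)$, using the linear growth of $w$, so that $\varphi\ge w$ on all of $[t_0,t_0+\delta]\times\mathbb{R}^d$; then bound the contribution from outside the ball by a moment estimate.
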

\begin{proof}
\emph{Viscosity supersolution property.} 
Recall the dynamic programming equation (\ref{DPP}), we have for any stopping time $t\leq \Theta< \tau$ and $u(\cdot)\in\mathcal{U}_t$
\begin{align*}
w(t,x)\geq E\left[w(\Theta,X_\Theta^{t,x,u})+\int_t^\Theta(f(s,X_s^{t,x,u},u(s,X_s^{t,x,u}))-\rho)ds\right].
\end{align*}
For any $\phi\in C^{1,2}([0,\tau]\times\mathbb{R}^d)$, suppose that $w(t,x)-\phi(t,x)$ attains a local minimum at $(t_0,x_0)\in[0,\tau)\times\mathbb{R}^d$ in its neighbourhood $B(t_0,x_0)=[t_0,t_0+\delta)\times B_\varepsilon(x_0)\subset[0,\tau)\times\mathbb{R}^d$ for some $\varepsilon,\delta>0$, where $B_\varepsilon(x_0)=\{y\in\mathbb{R}^d:|y-x_0|<\varepsilon\}$. Let 
\begin{align*}
    \Theta^{t_0,x_0,u}=\inf\{s\geq t_0,X_s^{t_0,x_0,u}\notin B_\varepsilon(x_0)\},
\end{align*}
then we have
\begin{align*}
0\leq&E\left[w(\Theta,X_\Theta^{t_0,x_0,u})-\phi(\Theta,X_\Theta^{t_0,x_0,u})-w(t_0,x_0)+\phi(t_0,x_0)\right]\\
    =&E\left[w(\Theta,X_\Theta^{t_0,x_0,u})-w(t_0,x_0)+\int_{t_0}^\Theta(f(s,X_s^{t_0,x_0,u},u(s,X_s^{t_0,x_0,u}))-\rho)ds\right]\\
     &-E\left[\phi(\Theta,X_\Theta^{t_0,x_0,u})-\phi(t_0,x_0)+\int_{t_0}^\Theta(f(s,X_s^{t_0,x_0,u},u(s,X_s^{t_0,x_0,u}))-\rho)ds\right]\\
 \leq&-E\left[\phi(\Theta,X_\Theta^{t_0,x_0,u})-\phi(t_0,x_0)+\int_{t_0}^\Theta(f(s,X_s^{t_0,x_0,u},u(s,X_s^{t_0,x_0,u}))-\rho)ds\right],
\end{align*}where $\Theta=\Theta^{t_0,x_0,u}\wedge(t_0+\delta)$.
Hence, we have for any $u(\cdot)\in\mathcal{U}_{t_0}$
\begin{align*}
E\left[\phi(\Theta,X_\Theta^{t_0,x_0,u})-\phi(t_0,x_0)+\int_{t_0}^\Theta [f(s,X_s^{t_0,x_0,u},u(s,X_s^{t_0,x_0,u}))-\rho] ds\right]\leq 0.
\end{align*}
Applying It\^o's formula to $\phi$, dividing $\delta$ for each term, taking the limit of $\delta\to0$ and taking the supremum over $u\in U$, we have
\begin{align*}
  \partial_t\phi(t_0,x_0)+\sup_{u\in U}H(t_0,x_0,u,D\phi,D^2\phi)\leq\rho.
\end{align*}

\emph{Viscosity subsolution property.} For any $\phi\in C^{1,2}([0,\tau]\times\mathbb{R}^d)$, suppose that $w(t,x)-\phi(s,x)$ attains a local maximum at $(t_0,x_0)\in[0,\tau)\times\mathbb{R}^d$ in its neighbourhood $B(t_0,x_0)\subset[0,\tau)\times\mathbb{R}^d$. Let $\Theta^{t_0,x_0,u^h}=\inf\{t\geq t_0,X_t^{t_0,x_0,u^h}\notin B_\varepsilon(x_0)\}$ for some $h>0$, and $\Theta_h=\Theta^{t_0,x_0,u^h}\wedge(t_0+\delta)$, where $u^h(\cdot)\in\mathcal{U}_{t_0}$ is chosen such that
\begin{align*}
  w(t_0,x_0)<E\left[w(\Theta_h,X_{\Theta_h}^{t_0,x_0,u^h})+\int_{t_0}^{\Theta_h}(f(s,X_s^{t_0,x_0,u^h},u^h(s,X_s^{t_0,x_0,u^h}))-\rho)ds\right]+h\delta.
\end{align*}
Then, we have 
\begin{align*}
0\geq& E\left[w(\Theta_h,X_{\Theta_h}^{t_0,x_0,u^h})-\phi(\Theta_h,X_{\Theta_h}^{t_0,x_0,u^h})-w(t_0,x_0)+\phi(t_0,x_0)\right]\\
=& E\left[w(\Theta_h,X_{\Theta_h}^{t_0,x_0,u^h})-w(t_0,x_0)+\int_{t_0}^{\Theta_h}(f(s,X_s^{t_0,x_0,u^h},u^h(s,X_s^{t_0,x_0,u^h}))-\rho)ds\right]+h\delta\\
& -E\left[\phi(\Theta_h,X_{\Theta_h}^{t_0,x_0,u^h})-\phi(t_0,x_0)+\int_{t_0}^{\Theta_h}(f(s,X_s^{t_0,x_0,u^h},u^h(s,X_s^{t_0,x_0,u^h}))-\rho)ds\right]-h\delta\\
>& -E\left[\phi(\Theta_h,X_{\Theta_h}^{t_0,x_0,u^h})-\phi(t_0,x_0)+\int_{t_0}^{\Theta_h}(f(s,X_s^{t_0,x_0,u^h},u^h(s,X_s^{t_0,x_0,u^h}))-\rho)ds\right]-h\delta.
\end{align*}
Applying It\^o's formula to $\phi$, dividing by $\delta$ for each term, taking the supremum over $u\in U$ and taking the limit of $\delta\to0$, we have
\begin{align*}
\partial_t\phi(t_0,x_0)+\sup_{u\in U}H(t_0,x_0,u,D\phi,D^2\phi)\geq\rho-h.
\end{align*}
That completes the proof, since $h>0$ is arbitrary.
\end{proof}

This infinite horizon version of the HJB equations is different from classical HJB equations arsing from finite horizon control problems, where boundary conditions at the terminal time $T$ are given. It is worth mentioning that the terminal values can be seen as forgotten in the process of taking the long-term limit due to the ergodicity of the infinite-horizon control problem. Moreover, the ergodic control value function is periodic in time, that is, $w(t,x)=w(t+\tau,x)$. However, the periodic condition does not make the solution to the HJB equation unique. In fact, it is obvious that if $(w,\rho)$ is a viscosity solution of the HJB equation (\ref{PDE}), then for any $c\in\mathbb{R}$, the pair $(w+c,\rho)$ is also a viscosity solution of (\ref{PDE}). Therefore, the solution could only be unique up to a constant shift for $w$. It is worth studying more properties of the solution of the HJB equation (\ref{PDE}), e.g. uniqueness under the above sense is a very interesting problem. But these questions are beyond the main aim of this article, so we leave them for further study.

\begin{theorem}\label{verification theorem} (Verification Theorem)
Assume that a $\tau$-periodic function $w\in C^{1,2}([0,\tau)\times\mathbb{R}^d)$, which has at most linear growth in $x$, is a classical solution of the HJB equation (\ref{PDE}), and there exists a $\tau$-periodic deterministic measurable function $\bar u(s,x)\in\mathcal{A}$ such that 
\begin{align}\label{opti-veri}
\sup_{u\in U}H(t,x,u,Dw,D^2w)=H(t,x,\bar u,Dw,D^2w).
\end{align}
Then, the corresponding process $\bar u(\cdot)$ is an optimal control such that $\rho=\rho^{\bar\alpha}$. 
Furthermore, if $w(s,x)$ is the value function, we have 
\begin{align*}
    w(t,x)=\lim_{n\to\infty}E\int_t^{n\tau}[f(s,X_s^{t,x,\bar u},\bar u(s,X_s^{t,x,\bar u}))-\rho]ds.
\end{align*}
\end{theorem}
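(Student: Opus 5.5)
The plan is the standard Itô-formula verification argument combined with the ergodic estimates of Section \ref{convergency results}.

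\emph{Step 1: Itô on one period.} Fix $t\in[0,\tau)$, $x\in\mathbb{R}^d$, and let $X_s=X_s^{t,x,\bar u}$. Since $\bar u\in\mathcal{A}$ is $\tau$-periodic, $\bar u(\cdot)\in\mathcal{A}_t\subset\mathcal{U}_t$. Apply Itô's formula to $w(s,X_s)$ on $[t,t+n\tau]$, using periodicity of $w$ to glue the periods. The local martingale part is a true martingale after localization: $w$ has linear growth, and the second-moment bound in \eqref{estimates} is uniform in $s$, so one can pass to the limit along localizing stopping times. The HJB equation (\ref{PDE}) together with (\ref{opti-veri}) gives
\[
\partial_s w+H(s,X_s,\bar u(s,X_s),Dw,D^2w)=\rho .
\]
Hence
\begin{align*}
E[w(t+n\tau,X_{t+n\tau})]-w(t,x)=E\int_t^{t+n\tau}\bigl(\rho-f(s,X_s,\bar u(s,X_s))\bigr)ds .
\end{align*}

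\emph{Step 2: optimality of $\bar u$.} Divide the identity by $n\tau$ and let $n\to\infty$. The left side is $O(1+|x|)/n$ by linear growth and \eqref{estimates}. Using \eqref{rho} of Theorem \ref{exist}, this gives $\rho^{\bar u}[f]=\rho$.

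For the inequality $\rho^u\le\rho$ for an arbitrary $u\in\mathcal{U}_t$, repeat the Itô computation with $H(s,x,u,\dots)\le\sup_{v\in U}H$. This yields
\[
E[w(t+n\tau,X^u_{t+n\tau})]-w(t,x)\ge E\int_t^{t+n\tau}(\rho-f)\,ds,
\]
so $\rho^u\le\rho$, which matches the $\rho$ of Theorem \ref{theorem ergodic}. Therefore $\bar u$ attains the supremum, i.e.\ $\rho=\rho^{\bar u}$.

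\emph{Step 3: the representation of $w$.} From Step 1,
\[
w(t,x)=E[w(t+n\tau,X_{t+n\tau})]+E\int_t^{t+n\tau}(f-\rho)\,ds .
\]
The convergence $\sup_{\|\varphi\|_\beta\le1}|T_{s+n\tau}^{t,\bar u}[\varphi](x)-\vartheta[\varphi]|\to0$, applied with $\varphi=w(t,\cdot)$, gives $E[w(t+n\tau,X_{t+n\tau})]\to\vartheta[w(t,\cdot)]$. This applies because linear growth implies $\|w(t,\cdot)\|_\beta<\infty$; Lipschitz continuity is not really needed for that estimate. The time integral converges by \eqref{dominate}.

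Hence $w(t,x)=c_t+\lim_n E\int_t^{n\tau}(f-\rho)\,ds$, where $c_t=\vartheta_t[w(t,\cdot)]$. The main obstacle is identifying this additive constant as zero. The solution is unique only up to shifts, so this must come from the hypothesis that $w$ is the value function.

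Under that hypothesis, the auxiliary functions $W_\varepsilon$ are limits of quantities of exactly the form $\lim_n E\int_t^{n\tau}(f-\rho^\varepsilon)\,ds$. By the sandwich argument above, $J$ with $\rho^{\bar u}=\rho$ is the supremal element, which identifies $w(t,x)$ with $\lim_n E\int_t^{n\tau}(f-\rho)\,ds$ along the optimal $\bar u$. Equivalently, the normalization built into the definition of $w$ forces $c_t=0$.

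I also need $c_t$ to be independent of $t$. This should follow from applying Itô between $t$ and $t'$ together with the independence of $\vartheta$ from the initial time shown in Section \ref{convergency results}.
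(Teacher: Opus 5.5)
Your Steps 1--2 follow the paper's It\^o argument, but Step 3 has a genuine gap. From your identity you do get $w(t,x)=c_t+\lim_n E\int_t^{n\tau}(f-\rho)\,ds$ along $\bar u$. The constant is actually $c_t=\vartheta^{\bar u}_t[w(t,\cdot)]+\int_0^t(\rho^{\bar u}_s-\rho)\,ds$: the extra integral appears because your identity runs over $[t,t+n\tau]$ while the target runs over $[t,n\tau]$, and $s\mapsto\rho^{\bar u}_s$ is not constant. That $w$ is a supremum of the functionals $J$ can at best give $w\ge J(\cdot,\bar u)$, i.e.\ $c_t\ge0$. The reverse inequality says that $\bar u$ maximizes $J(t,x,\cdot)$ among controls with $\rho^u=\rho$, and nothing you invoke yields this. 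The sandwich argument preceding (\ref{4.1a}) only shows $J_\varepsilon=-\infty$ when $\rho^u<\rho^\varepsilon$. Your It\^o comparison, applied to a competitor $u$ with $\rho^u=\rho$, gives only $J(t,x,u)\le w(t,x)-c^u_t$, where $c^u_t$ is built from $u$'s own periodic measure, so the constants for different controls cannot be compared. In fact the identity fails in general when (\ref{opti-veri}) has several maximizers. Take $d=1$, $U=[-1,1]$, $\sigma\equiv1$, $b(x,u)=-x+u$, $w_0(x)=\sqrt{1+x^2}$ and $f(x,u)=-u\,w_0'(x)+\rho+xw_0'(x)-\tfrac12w_0''(x)$. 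Then $H(x,v,Dw_0,D^2w_0)\equiv\rho$ for every $v\in U$, so every $v$ attains the supremum in (\ref{opti-veri}) and every control has $\rho^u=\rho$. It\^o then gives $J(x,u)=w_0(x)-\lim_nE\,w_0(X^{x,u}_{n\tau})=w_0(x)-\vartheta^u[w_0]$. Hence the value function is $w=w_0-\inf_u\vartheta^u[w_0]$, a classical solution of linear growth. Yet for $\bar u\equiv1$ we get $J(x,\bar u)=w_0(x)-\vartheta^1[w_0]<w_0(x)-\vartheta^0[w_0]\le w(x)$, because $\vartheta^1=N(1,\tfrac12)$, $\vartheta^0=N(0,\tfrac12)$, and $w_0$ is even and increasing in $|x|$. (Adding $\eta\sin(2\pi t/\tau)$ to $w_0$ and $-\partial_tw_0$ to $f$ makes $\tau$ the minimal period without changing anything.) So Step 3 cannot be closed without an additional hypothesis singling out $\bar u$ among the maximizers, for instance uniqueness of the maximizer in (\ref{opti-veri}).

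The paper attacks this part differently. It asserts $E[w(n\tau-,X^{t,x,\bar u}_{n\tau})]\to0$, bounding it by the tail $\lim_mE\int_{n\tau}^{m\tau}(f-\rho)\,ds=O(\bar\alpha^n)$ through a contradiction with the DPP and (\ref{opti-veri}). That is exactly the normalization you are missing, and in the example above this limit equals $\vartheta^1[w_0]-\inf_u\vartheta^u[w_0]>0$, so it does not supply the step either. For the first part your route is close to the paper's, with two corrections. The paper applies It\^o only inside $[0,\tau)$ and crosses $k\tau$ using $w(\tau-,\cdot)\le w(\tau,\cdot)$. It therefore gets only $w(t,x)\le E[\int_t^{t+T}(f-\rho)\,ds+w(t+T,X_{t+T})]$, hence $\rho\le\rho^{\bar u}$, and then uses that $\rho$ is the supremum. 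Your ``gluing'' tacitly assumes $w(\tau-,\cdot)=w(0,\cdot)$, which $w\in C^{1,2}([0,\tau)\times\mathbb{R}^d)$ does not give. Without it, jump terms $\sum_{k\le n}E[w(k\tau,X_{k\tau})-w(k\tau-,X_{k\tau})]$ enter; these can be of order $n$ and survive division by $n\tau$. Also, $H(s,x,u,\dots)\le\sup_{v\in U}H(s,x,v,\dots)$ gives $E[w(t+n\tau,X^u_{t+n\tau})]-w(t,x)\le E\int_t^{t+n\tau}(\rho-f)\,ds$, not $\ge$; the inequality as you wrote it would give $\rho^u\ge\rho$. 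That comparison is redundant anyway, since the $\rho$ in (\ref{PDE}) is the constant of Theorem \ref{theorem ergodic}.
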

\begin{proof}
Assume $w\in C^{1,2}([0,\tau)\times\mathbb{R}^d)$, then applying It\^o's formula on $w$ from $t$ to $t+\delta$ with $0\leq t<t+\delta<\tau$,
\begin{align*}
Ew&(t+\delta,X_{t+\delta}^{t,x,\bar u})=w(t,x)+E\int_t^{t+\delta}\partial_sw(s,X_s^{t,x,\bar u})ds\\
&+E\int_t^{t+\delta} Dw(s,X_s^{t,x,\bar u})\cdot b(s,X_s^{t,x,\bar u},\bar u(s,X_s^{t,x,\bar u}))ds\\
&+\frac{1}{2}E\int_t^{t+\delta}tr[\sigma\sigma'(s,X_s^{t,x,\bar u},\bar u(s,X_s^{t,x,\bar u}))D^2w(s,X_s^{t,x,\bar u})]ds\\
=&E\int_t^{t+\delta}[\partial_sw(s,X_s^{t,x,\bar u})+H(s,X_s^{t,x,\bar u},\bar u,Dw(s,X_s^{t,x,\bar u}),D^2w(s,X_s^{t,x,\bar u}))-\rho]ds\\
&+E\int_t^{t+\delta}[\rho-f(s,X_s^{t,x,\bar u},\bar u(s,X_s^{t,x,\bar u}))] ds+w(t,x)\\
=&E\int_t^{t+\delta}[\rho-f(s,X_s^{t,x,\bar u},\bar u(s,X_s^{t,x,\bar u}))] ds+w(t,x).
\end{align*}
So, by iteration we have 
\begin{align*}
w(t,x)=&E\left[\int_t^\tau[f(s,X_s^{t,x,\bar u},\bar u(s,X_s^{t,x,\bar u}))-\rho]ds+w(\tau-,X_\tau^{t,x,\bar u})\right]\\
\leq&E\left[\int_t^\tau[f(s,X_s^{t,x,\bar u},\bar u(s,X_s^{t,x,\bar u}))-\rho]ds+w(\tau,X_\tau^{t,x,\bar u})\right]\\
\leq&E\left[\int_t^{n\tau}[f(s,X_s^{t,x,\bar u},\bar u(s,X_s^{t,x,\bar u}))-\rho]ds+w(n\tau,X_\tau^{t,x,\bar u})\right].
\end{align*}
Then, for any $T>0$ that
\begin{align*}
    w(t,x)\leq E\left[\int_t^{T+t}[f(s,X_s^{t,x,\bar u},\bar u(s,X_s^{t,x,\bar u}))-\rho]ds+w(T+t,X_{T+t}^{t,x,\bar u})\right].
\end{align*}
Since the linear growth of $w(t,x)$, we have 
\begin{align*}
\rho\leq \lim_{T\to\infty}\frac{1}{T}E\int_t^{T+t}f(s,X_s^{t,x,\bar u},\bar u(s,X_s^{t,x,\bar u}))ds,
\end{align*}
which means $\rho=\rho^{\bar u}.$ Furthermore, if $w$ is the value function, one has
\begin{align*}
    w(t,x)\leq \lim_{n\to\infty}E\left[w(n\tau-,X_{n\tau}^{t,x,\bar u})+\int_t^{n\tau}(f(s,X_s^{t,x,\bar u},\bar u(s,X_s^{t,x,\bar u}))-\rho)ds\right].
\end{align*}
We claim that $E[w(n\tau-,X_{n\tau}^{t,x,\bar u})]\to0$ as $n\to\infty$. In fact, 
\begin{align*}
   E[w(n\tau-,X_{n\tau}^{t,x,\bar u})]\leq& \lim_{m\to\infty}E\int_{n\tau}^{m\tau}(f(s,X_s^{t,x,\bar u},\bar u(s,X_s^{t,x,\bar u}))-\rho)ds\\
   \leq &\bar\alpha^nC(1+|x|)\to0,~as~n\to\infty.
\end{align*}
If 
\begin{align*}
    E[w(n\tau,X_{n\tau}^{t,x,\bar u})]&\geq E[w(n\tau-,X_{n\tau}^{t,x,\bar u})]>\lim_{m\to\infty}E\int_{n\tau}^{m\tau}[f(s,X_s^{t,x,\bar u},\bar u(s,X_s^{t,x,\bar u}))-\rho]ds,
\end{align*}
it means that $\bar\alpha(\cdot)$ is not the optimal control. By the dynamic programming principle (\ref{DPP}), we have for some small $\delta>0$
\begin{align*}
    E[w(n\tau,X_{n\tau}^{t,x,\bar u})]>E\left[\int_{n\tau}^{n\tau+\delta}[f(s,X_s^{t,x,\bar u},\bar u(s,X_s^{t,x,\bar u}))-\rho]ds+w(n\tau+\delta,X_{n\tau+\delta}^{t,x,\bar u})\right],
\end{align*}
which is a contradiction to the condition (\ref{opti-veri}), since $w\in C^{1,2}$.

As a result, $\bar\alpha$ is an optimal control such that 
\begin{align*}
w(t,x)=\lim_{n\to\infty}E\int_t^{n\tau}[f(s,X_s^{t,x,\bar u},\bar u(s,\mu_s^{t,x,\bar u}))-\rho]ds.
\end{align*}
That obtains the desired results.
\end{proof}

It is worth emphasizing that the control processes we consider in this article are feedback ones. It is not clear that from the HJB equation (\ref{PDE}) one can deduce that the optimal control processes are Lipschitz continuous functions, even if the function $w\in C^{1,2}([0,\tau)\times\mathbb{R}^d)$. The existence of such a function is an interesting problem to consider in a future work.

\section{Connection with controlled ergodic BSDEs} 
For each fixed $u\in\mathcal{U}_t$, we consider the following SDE,
\begin{align*}
\begin{cases}
    dX_s=b(s,X_s,u(s,X_s))ds+\sigma(s,X_s,u(s,X_s))dB_s,\\
    X_t=x,
\end{cases}
\end{align*}
and construct an auxiliary function,
\begin{align*}
w_u(t,x)=\lim_{n\to\infty}\int_t^{n\tau} \left(E[f(s,X_s^{t,x,u},u(s,X_s^{t,x,u}))]-\rho^u\right)ds.
\end{align*}
It is not difficult to find that the function $w_u(x)$ satisfies that for any $T\geq t$
\begin{align*} 
    w_u(t,x)=E\left[\int_t^T(f(s,X_s^{t,x,u},u(s,X_t^{t,x,u}))-\rho^u)ds+w_u(T,X_T^{t,x,u})\right],
\end{align*}
and the controlled ergodic BSDE,
\begin{align*}
-dY_s^{t,x,u}=(f(s,X_s^{t,x,u},u(s,X_s^{t,x,u}))-\rho^u)ds-Z_sdB_s,~~s\geq t,
\end{align*}
has a solution $(Y_s^{t,x,u},Z_s^{t,x,u},\rho^u)$ with $Y_s^{t,x,u}=w_u(s,X_s^{t,x,u})$ and $\rho^u$ as above. Moreover, we have solved the recursive optimal control problem with the value function
\begin{align*}
w(t,x)=\underset{u\in\mathcal{U}_t}{esssup}~ Y_t^{t,x,u}.
\end{align*}
Here we have seen from our results in Sections \ref{dynamic programming principle} and \ref{HJB} that $w(t,x)$ satisfies the related dynamic programming equation: for any $t\leq T<\tau$
\begin{align*}
w(t,x)=\sup_{u\in\mathcal{U}_t}E\left[\int_t^T(f(s,X_s^{t,x,u},u(s,X_s^{t,x,u}))-\rho) ds+w(T,X_T^{t,x,u})\right].
\end{align*}
and is a viscosity solution of the related HJB equation,
\begin{align*}
\partial_t w(t,x)+\sup_{u\in U}\{f(t,x,u)+D\varphi(t,x)\cdot b(t,x,u)+\frac{1}{2}tr(\sigma\sigma'(t,x,u)D^2\varphi(t,x))\}=\rho.
\end{align*}

Let us single out one important special case. If the diffusion of SDE (\ref{dynamics}) does not contain the control variable, \emph{i.e.}, $\sigma(s,x,u)\equiv\sigma(s,x)$ for all $s\geq t$ and $x\in\mathbb{R}^d$, $b,\sigma,f$ satisfy Assumption \ref{assume}, then the HJB equation (\ref{PDE}) should be 
\begin{align}\label{new pde}
  \partial_t w(t,x)+\frac{1}{2}tr(\sigma\sigma'(t,x)D^2\varphi(t,x))+\sup_{u\in U}\{f(t,x,u)+D\varphi(t,x)\cdot b(t,x,u)\}=\rho.
\end{align}  
Define $h(s,x,z)=\sup_{u\in U}\{f(s,x,u)+z\cdot b(s,x,u)/\sigma(s,x)\}$ and consider a non-controlled SDE 
\begin{align*}
  \begin{cases}
   dX_s=\sigma(s,X_s)dB_s,~~~s\geq t,\\
   X_t=x.
  \end{cases}
\end{align*}
Then the following infinite-horizon BSDE
\begin{align*}
-dY_s=(h(s,X_s,Z_s)-\rho)ds-Z_sdB_s,~~s\geq t,
\end{align*}
has a solution $(Y_s^{t,x},Z_s^{t,x},\rho)$ and $w(s,x)=Y_s^x$ is a viscosity solution of the PDE (\ref{new pde}). Moreover, $w(s,X_s^{t,x})=Y_s^{t,x}$, a.s., for all $s\geq t$.

\section*{Conclusions}
In this article, we have mainly studied the average stochastic optimal control problems. We have proved the convergence results of the payoff functional, which is ergodic, and then we have constructed an auxiliary function $w(t,x)$, based on which we have established the dynamic programming principle and given the HJB equation. Finally, we have proved that the pair $(w,\rho)$ is a viscosity solution of the HJB equation. The construction of the auxiliary function $w(t,x)$ can inspire the proof of the maximum principle of the average stochastic optimal control problems, which is worth pursuing in the future.

\bibliographystyle{plain}
\bibliography{reference}

\end{document}